\newcommand{\cyan}[1]{\textcolor{cyan}{#1}}
\setlist[enumerate,1]{label={(\Alph*)}}
\setlist[enumerate,2]{label={(\alph*)}}
\setlist[enumerate,3]{label={$\bullet_{\arabic*}$}}
\newenvironment{PROOF}[2][\proofname.]
   {\begin{proof}[#1]}
   {\end{proof}}
\newtheorem{theorem}{Theorem}[section]
\newtheorem{claim}[theorem]{Claim}
\newtheorem{conclusion}[theorem]{Conclusion}
\newtheorem{observation}[theorem]{Observation}
\theoremstyle{definition}
\newtheorem{definition}[theorem]{Definition}
\newtheorem{discussion}[theorem]{Discussion}
\newtheorem{example}[theorem]{Example}
\newtheorem{explanation}[theorem]{Explanation}
\theoremstyle{remark}
\newtheorem{context}[theorem]{Context}
\newtheorem{notation}[theorem]{Notation}
\newtheorem{question}[theorem]{Question}
\newtheorem{remark}[theorem]{Remark}
\newcommand{\af}{\mathrm{af}}
\newcommand{\inv}{\mathrm{inv}}
\newcommand{\acc}{\mathrm{acc}}
\newcommand{\cf}{\mathrm{cf}}
\newcommand{\dom}{\mathrm{dom}}
\newcommand{\pp}{\mathrm{pp}}
\newcommand{\Th}{\mathrm{Th}}
\newcommand{\tp}{\mathrm{tp}}
\newcommand{\Mod}{\mathrm{Mod}}
\newcommand{\pe}{\mathrm{pe}}
\newcommand{\arity}{\mathrm{arity}}
\newcommand{\bfI}{\mathbf{I}}
\newcommand{\bfK}{\mathbf{K}}
\newcommand{\bfS}{\mathbf{S}}
\newcommand{\bfe}{\mathbf{e}}
\newcommand{\bff}{\mathbf{f}}
\newcommand{\bfm}{\mathbf{m}}
\newcommand{\bfs}{\mathbf{s}}
\newcommand{\bbL}{\mathbb{L}}
\newcommand{\bbZ}{\mathbb{Z}}
\newcommand{\mn}{\medskip\noindent}
\newcommand{\sn}{\smallskip\noindent}
\newcommand{\bn}{\bigskip\noindent}
\newcommand{\clI}{\mathcal{I}}
\newcommand{\clP}{\mathcal{P}}
\newcommand{\clS}{\mathcal{S}}
\newcommand{\clU}{\mathcal{U}}
\newcommand{\clW}{\mathcal{W}}
\newcommand{\gd}{\mathfrak{d}}
\newcommand{\eps}{\varepsilon}
\newcommand{\cl}{c\kern-.11ex \ell}
\newcommand{\lh}{{\ell\kern-.27ex g}}
\newcommand{\rest}{\restriction}
\newcommand{\caret}{{\char 94}}
\newcommand{\LL}{\langle}
\newcommand{\RR}{\rangle}
\newcommand{\lepref}[1]{({<}\,{#1})}
\newcommand{\olsi}[1]{\,\overline{\!{#1}}} 
\newcommand*{\defeq}{\mathrel{\vcenter{\baselineskip0.5ex \lineskiplimit0pt\hbox{\scriptsize.}\hbox{\scriptsize.}}}=}
\newcommand{\leqdot}{\mathrel{\mathord{\leq}\!\!\raise
0.8 pt\hbox{$\cdot$}}}
\def\mathunderaccent#1#2 {\let\theaccent#1\skewfactor#2
\mathpalette\putaccentunder}
\def\putaccentunder#1#2{\oalign{$#1#2$\crcr\hidewidth
\vbox to.2ex{\hbox{$#1\skew\skewfactor\theaccent{}$}\vss}\hidewidth}}
\newbox\noforkbox \newdimen\forklinewidth
\noforkbox\hbox{\box1\box0\relax}
\def\unionstick{\mathop{\copy\noforkbox}\limits}
\def\nonfork#1#2_#3{#1\unionstick_{\textstyle #3}#2}
\def\nonforkin#1#2_#3^#4{#1\unionstick_{\textstyle #3}^{\textstyle
    #4}#2}
\newbox\doesforkbox
\doesforkbox\hbox{\box1\box0\relax}
\def\nunionstick{\mathop{\copy\doesforkbox}\limits}
\def\fork#1#2_#3{#1\nunionstick_{\textstyle #3}#2}
\def\forkin#1#2_#3^#4{#1\nunionstick_{\textstyle #3}^{\textstyle
    #4}#2}
\newcommand{\stickT}{%
\setbox255=\hbox{\raise1ex\hbox{$\hspace{0.2pt}\,\bullet\,$}}
\mathord{\rlap{\hbox to\wd255{\hss\hbox{$|$}\hss}}
\box255}
}
\newcommand{\stickS}{%
\setbox255=\hbox{\raise0.6ex\hbox{$\scriptstyle\bullet$}}
\mathord{\rlap{\hbox to\wd255{\hss\hbox{$\scriptstyle|$}\hss}}
\box255}
}
\begin{document}
\makeatletter\def\shfiuwefootnote{\gdef\@thefnmark{}\@footnotetext}\makeatother\shfiuwefootnote{Version 2026-06-09. See \url{https://shelah.logic.at/papers/977/} for possible updates.}

\title {Modules and Infinitary Logics \\
Sh:977}
\author {Saharon Shelah}

\dedicatory {Dedicated to R\"udiger G\"obel for this 70th birthday}

\address{Einstein Institute of Mathematics\\
Edmond J. Safra Campus, Givat Ram\\
The Hebrew University of Jerusalem\\
Jerusalem, 91904, Israel\\
 and \\
 Department of Mathematics\\
 Hill Center - Busch Campus \\ 
 Rutgers, The State University of New Jersey \\
 110 Frelinghuysen Road \\
 Piscataway, NJ 08854-8019 USA}
\email{shelah@math.huji.ac.il}
\urladdr{http://shelah.logic.at}
\thanks{
First version done April 2010. The author thanks Alice Leonhardt for the beautiful typing up to 2019, and Matt Grimes for the typing on the current version. The author would also like to thank the
Israel Science Foundation and German-Israeli Science Foundation 
for partial support
for the earlier version.  
For the later version 
We thank the ISF (Israel Science Foundation)
for partial support by grant 2320/23 (2023-2027).  
Paper Number 977.}



\subjclass[2010]{Primary 03C60, 13C99; Secondary: 03C10, 03C45, 03C48,
 03C75, 20K99, 13C05}

\keywords {model theory, modules, stability, infinitary logics,
 elimination of quantifiers}

\date{June 8, 2026}

\begin{abstract}
We deal with Abelian groups and $R$-modules. We consider theories
in infinitary logic of
the form $\mathbb{L}_{\lambda,\theta}$ of such structures $M$ and prove they
have elimination of quantifiers up to positive existential formulas,
(so ones defining subgroups of some power of $M$). 
However, we demand that we expand by enough 
individual constants. Hence those theories are stable in the
appropriate sense and understood to some extent.

In 2026, John Baldwin pointed out a mistake
in the end of the proof of the main claim of Section 4, 
which is used in the theorem in Section 2, 
and made further requests for clarifications. 
Here this is corrected, in addition to other improvements. 
The error is corrected 
in three ways --- in Section 2 
we can use a weaker version of Section 4,
and in section 4 we also get the original result with more assumptions on the cardinal. 
Lastly, we provide a shorter and self-contained proof of the main theorem in \S2.
We can use the older version of Section 4, 
but then we use somewhat larger cardinals.
\end{abstract}

\maketitle
\numberwithin{equation}{section}
\setcounter{section}{-1}
\newpage

\section{Introduction}

Much is known on classes of $R$-modules and first order logic.
Szmielew \cite{Szm48} proved the decidability of the theory of
Abelian groups. In \cite{Szm55}, she proved an elimination of
quantifiers in the theory of Abelian groups up to Boolean combinations
of p.e.\ (\emph{positive existential}) formulas.

Eklof \cite{Ek71} proved the existence of universal homogeneous
$R$-models in $\lambda$ if $\lambda = \lambda^{< \gamma}$, where
$\gamma$ depends only on $R$. Fisher
improved this to saturated models of elementary classes (see his
review of \cite{Ek71}); this implies stability by a general theorem from \cite[\S0]{Sh:11} (or \cite[Ch.\,III]{Sh:c}).

Baur \cite{Bau76} proved that for the class of $R$-modules, any first-order formula 
is equivalent to a Boolean combination of positive existential formulas, and also proved the stability of $\Th(M)$ for $M$ an $R$-module.

We like to know for a given ring $R$ how complicated the 
class of $R$-modules which are
models of a sentence $\psi$ in an infinitary logic.

\bn
\begin{question}\label{y2}
Given a ring $R$, for the class $\Mod_R$ of left $R$-modules:

\mn
1) Does it have (for the logic $\bbL_{\lambda,\mu}$) a kind of 
elimination of quantifiers (say, up to some depth)?

\mn
2) Is it stable? (Say, no formula $\varphi(\bar x,\bar y) \in \bbL_{\infty,\infty}(\tau_R)$
linearly ordering an arbitrarily long sequence of tuples in some models of $\psi$?)

\mn
3) Can we define something like non-forking?
\end{question}

\bn
\begin{question}\label{y5}
Do we have a parallel of the main gap --- i.e.\ proving that
either every $M \in \Mod_\psi$ can be characterized by some
suitable cardinal invariants \underline{or} that there are many complicated
$M \in \Mod_\psi$?

Here we first show that for any $R$-module, in
$\bbL_{\lambda,\theta}(\tau_R)$ (or better,
$\bbL_{\infty,\theta,\gamma}(\tau_R)$ --- see \ref{z3}(3)), we have a version of
eliminating quantifiers up to positive existential formulas. 
\underline{However}, we add parameters. Second, by this we can
prove some versions and consequences of stability. More specifically:
\begin{enumerate}
    \item[$\bullet$] After expanding by enough individual constants, every formula in\\ $\bbL_{\infty,\theta,\gamma}(\tau_R)$ (see \ref{z3}(3)) is 
    equivalent to a Boolean combination of such positive existential formulas.
\sn
    \item[$\bullet$] The number of added individual constants is reasonable: $\le \beth_\gamma\big( |\tau|^{< \theta} \big)$.
\sn
    \item[$\bullet$] We have stability: i.e.\ no long sequences of linearly ordered $\lepref{\theta}$-tuples.
\sn
    \item[$\bullet$] $(\Lambda^\pe_{\eps,\alpha},2)$-indiscernible implies $\Lambda^\pe_{\eps,\alpha}$-indiscernible.
\sn
    \item[$\bullet$] Convergence follows (see Definition \ref{b8}).
\end{enumerate}
\end{question}

\bigskip
In 2025, this work was continued in a paper with Asgharzadeh and Golshani \cite{Sh:1246}.

We may use models with several \emph{sorts} --- that is, multiple distinct structures defined on them. E.g.,\ when constructing an $R$-module we need a set of objects which are the elements of the module, and a set of elements of the ring $R$, each with their own operations of addition and (scalar) multiplication. 
Hence when we need to disambiguate them, we will write something like $x +_\bfs y$, $x -_\bfs v$ for each sort separately. It makes no difference (see \ref{z15}).

\newpage

\section {Preliminaries}

\mn
\begin{notation}\label{z2}
Let $\theta^-$ be $\sigma$ if $\theta = \sigma^+$ and $\theta$ if $\theta$ is a limit cardinal.
\end{notation}

\bn
\begin{definition}\label{z3}
1) A vocabulary $\tau$ consists of function symbols (e.g.\ individual constants) 
and predicates (relation symbols). In addition the vocabulary generally assign
to each of them its arity (number of places) $\arity_\tau(-)$; here it can be an 
infinite ordinal. An individual constant is a 0-place function.

\mn
2) For a vocabulary $\tau$, we say $M$ is a $\tau$-structure \underline{when} it consists of:
\sn
\begin{enumerate}[(a)]
    \item $|M|$, the \emph{universe} of $M$; this is a non-empty set of the so-called elements of $M$.    
    However, we may write $a \in M$, $\bar a \in {}^\eps\!M$, $A \subseteq M$, instead of
    $a \in |M|$, $\bar a \in {}^\eps(|M|)$, etc.
\sn
    \item $F^M$, a function from ${}^\eps\!M$ to $M$ (possibly partial), for each function symbol $F$ from $\tau$. Here $\eps$ is the ordinal $\arity_\tau(F)$.
\sn
    \item $P^M \subseteq {}^\eps\!M$ (where $\eps$ is the ordinal $\arity_\tau(P)$), for $P$ a predicate from $\tau$.
\end{enumerate}
We may write $\tau_M = \tau(M) \defeq \tau$.

\mn
3) $\bbL_{\mu,\theta,\alpha}(\tau)$ is the set of formulas $\varphi(\bar x) \in \bbL_{\mu,\theta}$ (so $\lh(\bar x) < \theta$) of quantifier depth $<1+\alpha$.
\end{definition}

\bn
\begin{definition}
\label{z5}
1) We say $\tau$ is a $\theta$-\emph{additive} (or a $\theta$-Abelian) vocabulary 
\underline{when} $\tau$ has the two-place function symbols 
$x+y,x-y$, the individual constant 0, and the other predicates and 
function symbols have arity $< \theta$.

\mn
2) $M$ is a $\theta$-\emph{additive structure} (or model) \underline{when}:
\begin{enumerate}[(a)]
    \item $\tau_M$, (the vocabulary of $M$) is a $\theta$-additive vocabulary.
\sn
    \item $G_M \defeq (|M|,+^M,-^M,0^M)$ is an Abelian group.
\sn
    \item If $P \in \tau_M$ is an $\eps$-place predicate \underline{then} $P^M$ is a subgroup of $(G_M)^\eps$.
\sn
    \item If $F \in \tau_M \setminus \{+,-,0\}$ is an $\eps$-place function symbol \underline{then} $F^M$ is a partial $\eps$-place function from $M$ to $M$ and 
    $$
    \mathrm{graph}\big(F^M\big) \defeq \big\{\bar a \caret \big\LL F^M(\bar a)\big\RR : \bar a \in \dom(F^M) \big\}
    $$ 
    is a subgroup of $(G_M)^{\eps +1}$.
\end{enumerate}

\mn
3) If $\varphi(\bar x,\bar y)$ is a formula in the vocabulary $\tau$, $M$ is a $\tau$-model, and $\bar b \in {}^{\lh(\bar y)}\!M$, then we write 
$\varphi(M,\bar b) = \varphi({}^{\lh(\bar x)}\!M,\bar b)$ to mean the set
$$
\big\{ \bar a \in {}^{\lh(\bar x)}\!M : M \models \varphi[\bar a,\bar b] \big\}.
$$

\mn
4) We say $M$ is a $\theta$-\emph{affine structure} (or model) \underline{when}:
\begin{enumerate}
    \item [(a),(b)] As in part (2).
\sn
    \item [(c)] If $P \in \tau_M$ is an $\eps$-place predicate \underline{then} $P^M$ is an affine subset. (That is, $\bar a,\bar b,\bar c \in P^M \Rightarrow \bar a - \bar b + \bar c \in P^M$.)
\sn
    \item [(d)] As in part (2), but $\mathrm{graph}\big(F^M\big)$ is an affine subset.
\end{enumerate}
\end{definition}

\bn
\begin{remark}\label{z5.1}
1) Definition \ref{z5}(4) is not used in our theorem.

\mn
2) In \ref{z5}(4), we may replace $\{+,-,0\}$ by a three-place function symbol $F_\af$, and change clauses (a)-(d) as follows.
\begin{enumerate}[(a)]
    \item  $F_\af \in \tau_M$
\sn
    \item For any $b \in M$, the function  $(x,y) \mapsto F_\af^M(x,b,y)$ defines an additive group $G_{M,b}$ in which $b$ is the zero.
\sn
    \item As in \ref{z5}(4), but here `affine subset' means 
    $$
    \bar a,\bar b,\bar c \in P^M \Rightarrow G^M(\bar a, \bar b, \bar c) \defeq \LL F_\af^M(a_\zeta,b_\zeta,c_\zeta) : \zeta < \eps\RR \in P^M.
    $$

    \item If $H \in \tau$ is an $\eps$-place function symbol then $\mathrm{graph}\big(H^M\big)$ is an affine subset.
\end{enumerate}

\mn
3) We may also fix a sequence of predicates $\LL P_i,F_i : i < i_*\RR$ in $\tau$, where
\begin{enumerate}
    \item [(b)$'$] 
    \begin{itemize}
        \item $\LL P_i^M : i < i_*\RR$ is a partition {of $M$} (so $P_i$ is a unary predicate).
\sn
        \item $F_i^M$ is an affine operation on $P_i^M$ (so $F_i$ is a three-place function symbol).
    \end{itemize}
\end{enumerate}

\mn
4) In the affine framework above, \ref{z5}(4) can be treated in a perhaps more transparent way: just add more individual constants to $\tau_M$. That is:
\begin{enumerate}
    \item [$\boxplus$] For $\clI \subseteq M$, consider replacing $\tau_M$ by $\tau_{M,\clI} \defeq \tau_M \cup \{c_a : a \in \clI\}$, where the $c_a$-s are individual constants. $M[\clI]$ is $M$ expanded to a $\tau_{M,\clI}$-model by {$c_a^{M[\clI]} = a$}.

    Now the atomic formula `$x = c_a$' is not good for the ``additive" version (but is for the affine one).
\end{enumerate}
In this framework, $\Lambda_{\alpha,\eps}^\pe$ is as in \ref{a6}, but in Claim \ref{a8}:
\begin{itemize}
    \item If $\varphi(\bar x) \in \Lambda_{\alpha,\eps}^\pe$ for $\bbL(\tau_{M,\clI})$, then $\varphi(M)$ is an affine subset of ${}^\eps\!M$ --- i.e.\ closed under $\bar a - \bar b + \bar c$. (Or closed under $F_\af^M$ as in \ref{z5.1}(2).) 
\end{itemize}
\end{remark}

\bn
\begin{remark}\label{z6}
1) Fisher \cite{Fis77} defines and deals with ``Abelian structures" in other directions.

\mn
2) We use parentheses for formulas $\varphi(\bar x)$, but write (e.g.) $M \models \varphi[\bar a]$ for $\bar a \in {}^{\lh(\bar x)}\!M$.
\end{remark}

\mn
\begin{definition}\label{z8}
1) We consider an $R$-module $M$ as a $\tau(R)$-structure, where $\tau_R = \tau(R)$ 
is the vocabulary of $R$-modules. I.e.\ there are binary functions $x+y,x - y$, an
individual constant $0$, and unary function symbols $F_a$ 
(interpreted as multiplication by $a$ from the left) for every $a \in R$.

\mn
2) If $\bar x$ and $\bar y$ have length $\eps$, \underline{then} we let
$\bar x + \bar y = \LL x_\zeta + y_\zeta : \zeta < \eps \RR$ and 
$\bar x - \bar y = \LL x_\zeta - y_\zeta : \zeta < \eps \RR$;
similarly for $a \bar x$ with $a \in R$, and when we replace 
$\bar x$ and/or $\bar y$ by a member of ${}^\eps\!M$.
\end{definition}

\bn
\begin{observation}\label{z12}
$1)$ For any ring $R$, an $R$-module is an $\aleph_0$-additive structure in the vocabulary $\tau_R$.

\mn
$2)$ For a $\tau$-additive model $M$, for every function symbol or $\tau$-term $F(\bar x)$, we have
\begin{enumerate}
    \item[$(a)$] $M \models ``F(\bar a \pm \bar b) = F(\bar a) \pm F(\bar b)"$  
    
    (When $F^M$ is partial, this means that
    if two of the terms are well-defined then so is the third, and the equality holds.)
\sn
    \item[$(b)$] For $P$ a predicate from $\tau_M$, $M \models P(\bar a \pm \bar b)$ whenever 
    $M \models P(\bar a) \wedge P(\bar b)$.
\end{enumerate}
\end{observation}

\newpage

\section {Eliminating quantifiers}

\begin{context}
\label{a2}
1) $R$ is a fixed ring and $\tau = \tau_R$ (see \ref{z8}(1)), \underline{or}
$\tau$ is just a $\theta$-additive vocabulary (see \ref{z5}(1), \ref{z12}(1)). 

\mn
2) $\bfK$ is the class of $R$-modules \underline{or} of $\tau$-additive models.

\mn
3) $M,N$ will denote $R$-modules \underline{or} just $\tau$-additive models.

\mn
4) $\theta = \cf(\theta)$.
\end{context}

\bn
\begin{definition}\label{a6}
For $\eps < \theta$ and ordinal $\alpha$ (and $\tau$ as in \ref{a2}(1)), we shall define 
$$
\Lambda^\pe_{\alpha,\eps} = \Lambda^{\pe,\theta}_{\alpha,\eps} = \Lambda^{\pe,\theta}_{\alpha,\eps}(\tau)
$$ 
as a set of formulas $\varphi(\bar x)$ in $\bbL_{\infty,\theta}(\tau)$ 
--- in fact, in $\bbL_{\infty,\theta,\alpha}(\tau)$ --- with $\lh(\bar x) = \eps < \theta$. 
The construction will proceed by induction on the ordinal $\alpha$.

For $\zeta < \theta$, we write $\Lambda^\pe_{\alpha,\eps,\zeta}$ for the set of 
$\varphi = \varphi(\bar x,\bar y)$ with $\lh(\bar x) = \eps$ and $\lh(\bar y) = \zeta$ 
with $\varphi \in \Lambda^\pe_{\alpha,\eps + \zeta}$. We define
$\Lambda^\pe_\alpha \defeq \bigcup\limits_{\eps < \theta} \Lambda^\pe_{\alpha,\eps}$ and 
$\Lambda^\pe_{\alpha,\eps,< \theta} \defeq \bigcup\limits_{\zeta < \theta} \Lambda^\pe_{\alpha,\eps,\zeta}$. 
If $\tau = \tau_R$ we may write $\Lambda^\pe_{\alpha,\eps}(R)$.

The definition is as follows:

\bn
\textbf{Case 1:} $\alpha = 0$.

\mn
\underline{For $R$-modules:}

$\Lambda^\pe_{0,\eps}$ is the set of formulas
$\varphi = \varphi(\bar x)$ of the form
$\sum\limits_{\ell < n} a_\ell x_{\zeta_\ell} = 0$ with $\zeta_\ell < \lh(\bar x)$. 
Equivalently (but perhaps better phrased), they are of the form 
$\sum\limits_{\zeta < \eps} a_\zeta x_\zeta = 0$, where $a_\zeta \in R$ is $0_R$ 
for all but finitely many $\zeta$-s.

\mn
\underline{For }g\underline{eneral $\tau$:} (so here, this is the $\tau$-additive case).

$\Lambda^\pe_{0,\eps}$ is the set of formulas $\varphi(\bar x)$ of the form $P(\olsi\sigma(\bar x))$, 
where $\olsi\sigma$ is a sequence of length $\arity_\tau(P)$ of 
terms 
(in the variables $\bar x$). 
$P$ may be equality, or any predicate from $\tau$ of arity equal to $\lh(\olsi\sigma)$.

\bn 
\textbf{Case 2:} $\alpha$ a limit ordinal.

$\Lambda^\pe_{\alpha,\eps} \defeq \bigcup\limits_{\beta < \alpha} \Lambda^\pe_{\beta,\eps}$.

\bn
\textbf{Case 3:} $\alpha = \beta + 1$.

We define $\Lambda_{\alpha,\eps}^\pe$ as the union of $\Lambda_{\beta,\eps}^\pe$ 
together with the set of all formulas 
$\psi(\bar x)$ of the form 
$$
(\exists \bar y_\zeta)\bigwedge \big\{\varphi(\bar x \caret \bar y_\zeta) : \varphi(\bar x,\bar y_\zeta) \in \Phi_\zeta \big\},
$$ 
for some $\zeta < \theta$ and $\Phi_\zeta \subseteq \Lambda^\pe_{\beta,\eps, \zeta}$.
(Again, keep in mind $\Lambda^\pe_{\beta,\eps, \zeta}$ is a set of formulas of the form $\varphi(\bar x_{[\eps]},\bar y_{[\zeta]})$.)
\end{definition}

\bn
\begin{claim}
\label{a8}
$1)$ In \emph{\ref{a6}}, $\Lambda^\pe_{\alpha,\eps}$ is $\subseteq$-increasing with $\alpha$, and is of cardinality\\ $\le \beth_\alpha(|\tau| + \aleph_0)$ if 
$\theta = \aleph_0$, and $\beth_\alpha(|\tau|^{< \theta})$ in general.

\mn
$2)$ For $M \in \bfK$ and $\varphi(\bar x) \in \Lambda^\pe_{\alpha,\eps}(\tau)$, the set
$$
\varphi(\olsi M) \defeq \big\{ \bar b \in {}^\eps\!M : M \models \varphi[\bar b] \big\}
$$
is an Abelian subgroup of ${}^\eps\!M$, and the set $\big\{\bar b \in {}^\eps\!M : M \models \varphi[\bar b-\bar a]\big\}$ is affine (that is, closed under $\bar x-\bar y + \bar z$) for any $\bar a \in {}^\eps\!M$.
\end{claim}

\begin{PROOF}{\ref{a8}}
Easy.
\end{PROOF}

\bn
\begin{theorem}\label{a10}
For every $\alpha$ and every $M \in \bfK$, there is a subset $\bfI = \bfI_\alpha$ 
of ${}^{\theta >}\!M$ of cardinality 
$\le \kappa_\alpha \defeq \beth_\alpha(|\tau|^{< \theta})$ such that 
in $M$, we have
\begin{enumerate}
    \item [$\boxdot_\alpha$] Every formula $\psi(\bar x)$ from  $\bbL_{\infty,\theta,\alpha}(\tau)$ (so $\lh(\bar x) < \theta$) is equivalent in $M$ to a Boolean combination of formulas (possibly infinitely many) of the form $\varphi(\bar x - \bar a)$ with  $\varphi(\bar x) \in \Lambda^\pe_{\alpha,\lh(\bar x)}(\tau)$ and $\bar a \in \bfI \cap {}^{\lh(\bar x)}\! M$. 
\end{enumerate}
\end{theorem}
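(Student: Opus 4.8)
The plan is to proceed by induction on $\alpha$, building the set $\bold I_\alpha$ of parameters and the quantifier-elimination statement simultaneously. For the base case $\alpha = 0$ (or $\alpha$ small enough to capture atomic formulas and their Boolean combinations), one takes $\bold I_0$ to include $0$ and enough parameters so that every atomic formula $\psi(\bar x, \bar b)$ — which by the affine/additive structure of $M$ defines a coset of a $\Lambda^{\text{pe}}_{0,\lg(\bar x)}$-definable subgroup — is of the required form $\varphi(\bar x - \bar a)$. The cardinality bound $\kappa_0 = |\tau| + \theta^-$ suffices since the number of atomic formulas (up to the relevant equivalence) and the number of cosets we need witnesses for is controlled by $|\tau|$ and the lengths $<\theta$. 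Limit stages are immediate from the $\subseteq$-increasing nature of the $\Lambda^{\text{pe}}_{\alpha}$ hierarchy (Case 2 of Definition \ref{a6}) and taking unions of the $\bold I_\beta$, $\beta<\alpha$; the cardinality stays $\le \beth_\alpha(|\tau|+\theta^-)$ because $\beth$ absorbs such suprema.

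The successor step $\alpha = \beta+1$ is the heart of the argument. Given $\psi(\bar x, \bar b) \in \bbL_{\infty,\theta,\alpha}(\tau)$, push the outermost quantifier block inward: $\psi$ is built from $\bbL_{\infty,\theta,\beta}$-formulas by conjunctions, disjunctions, negations, and blocks of $<\theta$ existential (equivalently, since we can negate, universal) quantifiers. By the induction hypothesis each $\bbL_{\infty,\theta,\beta}$-subformula $\chi(\bar x, \bar y, \bar b)$ is, in $M$, a Boolean combination of formulas $\varphi(\bar x^\frown \bar y - \bar a)$ with $\varphi \in \Lambda^{\text{pe}}_{\beta, \cdot}$ and $\bar a \in \bold I_\beta$. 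So the task reduces to: given a Boolean combination $B$ of cosets of $\Lambda^{\text{pe}}_{\beta}$-subgroups of ${}^{\varepsilon+\zeta}M$, show that $\exists \bar y\, B(\bar x, \bar y)$ is, over a controlled set of new parameters, again a Boolean combination of cosets of $\Lambda^{\text{pe}}_{\alpha}$-subgroups of ${}^{\varepsilon}M$. Writing $B$ in disjunctive normal form $\bigvee_j \bigl(\bigwedge_{k} \varphi_{j,k}(\bar x^\frown\bar y - \bar a_{j,k}) \wedge \bigwedge_\ell \neg\varphi'_{j,\ell}(\bar x^\frown\bar y - \bar a'_{j,\ell})\bigr)$ and distributing $\exists\bar y$ over $\bigvee_j$, each disjunct is the projection to the $\bar x$-coordinates of $(\text{an affine subspace}) \cap \bigcap_\ell (\text{complement of an affine subspace})$. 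The key algebraic fact — this is where Observation \ref{z12}(2) and Claim \ref{a8} do the work — is that projecting an intersection of a coset with finitely (or $<\theta$) many negated cosets behaves well: the projection of the positive part is exactly a coset of a $\Lambda^{\text{pe}}_{\alpha}$-subgroup (namely $\exists\bar y\,\bigwedge_k\varphi_{j,k}$, which is literally of the form prescribed in Case 3), and each negated coset, when the ambient positive coset is nonempty, either disappears or contributes a ``difference of cosets'' that, after intersecting with the projected positive coset, is again expressible by finitely many cosets of $\Lambda^{\text{pe}}_{\alpha}$-subgroups — this is the standard ``a coset minus a subcoset, intersected with cosets, projects to a Boolean combination of cosets'' lemma for modules, adapted to the infinitary/$\theta$-ary setting. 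One must be careful that the number of $\varphi$'s and $\bar a$'s produced, and the lengths of the tuples involved, stay within $\theta$ and $\kappa_\alpha$: the new parameters $\bar a \in \bold I_\alpha$ are obtained from old ones in $\bold I_\beta$ by applying group operations (sums, differences, and picking a single representative of each nonempty intersected coset), and there are at most $\kappa_\beta^{<\theta} \le \beth_\alpha(|\tau|+\theta^-)$ such combinations since $\beta<\alpha$.

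Assembling: set $\bold I_\alpha$ to be the closure of $\bold I_\beta$ under the group operations $+,-$ and under choosing, for each tuple-length $\varepsilon<\theta$ and each finite/$(<\theta)$-sequence of elements of $\bold I_\beta$ defining via a $\Lambda^{\text{pe}}_{\beta}$-formula a nonempty coset in ${}^\varepsilon M$, one representative of that coset; then also take unions over $\beta<\alpha$. Its cardinality is $\le (\kappa_\beta)^{<\theta} + \sup_{\beta<\alpha}|\bold I_\beta|$, and since each $\kappa_\beta = \beth_\beta(|\tau|+\theta^-)$ with $\beta<\alpha$ we get $\le \beth_\alpha(|\tau|+\theta^-) = \kappa_\alpha$, using that $\beth_\alpha$ is a strong limit above $\beth_\beta$ for $\beta<\alpha$ (and at successor $\alpha$, $\beth_\alpha = 2^{\beth_{\alpha-1}} \ge (\kappa_{\alpha-1})^{<\theta}$ provided $\theta \le \beth_{\alpha-1}$, which holds once $\alpha$ is past the initial few ordinals — the small-$\alpha$ cases being handled directly as in the base case). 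The main obstacle I anticipate is not any single deep idea but the bookkeeping in the successor step: verifying precisely that the Boolean combination obtained after eliminating one quantifier block uses only $\Lambda^{\text{pe}}_{\alpha}$-formulas (not higher) and only parameters from the newly-closed $\bold I_\alpha$, while the tuple lengths stay $<\theta$ — in particular handling the negated conjuncts correctly, since $\exists\bar y$ does not commute with negation, and one must invoke the module-theoretic fact that the image of a coset under a group homomorphism (here, a projection) together with the images of finitely many ``forbidden'' subcosets still cuts out a set definable by a Boolean combination of cosets. The atomic/base analysis and the $\theta = \cf(\theta)$ hypothesis (Convention \ref{a2}(4)) are what keep the quantifier blocks and the indexing under control throughout.
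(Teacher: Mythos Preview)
Your inductive framework is right, but the successor step has a genuine gap, and it is not ``just bookkeeping.'' The projection $\exists\bar y$ applied to a conjunction with $<\theta$ many \emph{negated} coset conditions is precisely the hard part, and the first-order Baur--Monk/Neumann-lemma argument you allude to (``a coset minus a subcoset \ldots\ projects to a Boolean combination of cosets'') does not survive passage to infinitely many negative conjuncts: a group \emph{can} be covered by infinitely many cosets of subgroups of infinite index, so the set $\{\bar c : M\models\psi(\bar b,\bar c) \wedge \bigwedge_{i<\kappa_\beta}\neg\varphi_i(\bar b,\bar c)\}$ need not be nonempty merely because each $\varphi_i$-coset is a proper subset of the $\psi$-coset. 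The paper handles this via Claim \ref{p2}: given the subgroups $G_i = \psi({}^\xi M)\cap\varphi'_i({}^\xi M)$ of $G = \psi({}^\xi M)$, one forms an ideal $I$ on the index set separating those $i$ for which $(G:G_i)$ is ``large'' (Case 1: one can always dodge all of them simultaneously by a pigeonhole on $\lambda$ many shifts) from those for which it is not (Case 2: only $<\lambda$ many coset-patterns occur, and representatives $\bar d_\iota$ are pre-loaded into $\bold I_\alpha$ via $\boxplus_\alpha(e)$). Without this dichotomy you also cannot justify the cardinality bound: your count ``$\le(\kappa_\beta)^{<\theta}$'' tacitly assumes the needed representatives are determined by the formulas, but a single $\Lambda^{\text{pe}}_\beta$-subgroup can have $|M|$ cosets in $M$; it is exactly Claim \ref{p2}(c) that cuts this down to $<\lambda$.

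The paper also organizes the induction differently: rather than a syntactic transformation of formulas, it proves the back-and-forth criterion $\boxdot$ (if $\bar b_1,\bar b_2$ are $\alpha$-equivalent over $\bold I_\alpha$ and $\bar c_1$ is given, then some $\bar c_2$ makes $\bar b_1{}^\frown\bar c_1$ and $\bar b_2{}^\frown\bar c_2$ $\beta$-equivalent over $\bold I_\beta$), from which the elimination follows by the usual Karp-style argument. This is not merely cosmetic: the back-and-forth formulation is what lets the ideal of Claim \ref{p2} be applied cleanly --- one only has to \emph{produce} a single witness $\bar c_2$, splitting on whether $\kappa_\beta\in I$, rather than describe the full solution set syntactically.
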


\bn
Before proving \ref{a10}, we shall note that it implies
\begin{conclusion}\label{a12}
For every $M \in \bfK$, $\eps < \theta$, $\bfI_\alpha$ as in Theorem \emph{\ref{a10}} for $\alpha$ a limit ordinal, and 
$\bar a \in {}^\eps\!M$, for some
$i_*,j_* \le \kappa_\alpha$ and $\varphi_i(\bar x),\psi_j(\bar x) \in \Lambda^\pe_{\alpha,\eps}$ for $i < i_*$, $j < j_*$ we have that
$$
\big\{ \bar a' \in {}^\eps\!M : \tp_{\bbL^\pe_{\infty,\theta,\alpha}}\!(\bar a',\varnothing,M) = \tp_{\bbL^\pe_{\infty,\theta,\alpha}}\!(\bar a, \varnothing,M) \big\}
$$ 
is equal to 
$$
\big\{\bar a' \in {}^\eps\!M : M \models \bigwedge\limits_{i < i_*} \varphi_i(\bar a' - \bar a) \wedge \bigwedge \{\neg \psi_j(\bar a' - \bar a'') : j < j_* \text{ and } \bar a'' \in \bfI_\alpha \cap {}^\eps\!M\} \big\}.
$$
\end{conclusion}

\bn
\begin{definition}\label{a17}
1) We say $\bar b_1,\bar b_2 \in {}^\eps\!M$ are
$\alpha$-\emph{equivalent over} $\bfI \subseteq {}^{\theta >}\!M$ \underline{when}
$$
\varphi(\bar x_{[\eps]}) \in \Lambda^\pe_{\alpha,\eps}(R) \wedge \bar a \in \bfI \cap {}^\eps\!M\  \Rightarrow\ M \models ``\varphi[\bar b_1 - \bar a] \Leftrightarrow \varphi[\bar b_2 - \bar a]".
$$

\mn
2) If we write $A \subseteq M$ instead of $\bfI$, we  mean $\bfI = {}^{\theta>}\!A$.
\end{definition}

\bn
We shall use freely
\begin{observation}\label{a19}
The sequences $\bar b_1,\bar b_2 \in {}^\eps\!M$ are
$\alpha$-equivalent over $\bfI \subseteq {}^\eps\!M$ \underline{iff}
for any $\varphi(\bar x)\in \Lambda^\pe_{\alpha,\eps}$ we have
$(a) \vee (b)$, where:
\begin{enumerate}
    \item[$(a)$] For some $\bar a \in \bfI \cap {}^\eps\!M$ we have $M \models \varphi[\bar b_1 - \bar a] \wedge \varphi[\bar b_2 - \bar a]$.
\sn
    \item[$(b)$] For every $\bar a \in \bfI \cap {}^\eps\!M$ we have $M \models \neg\varphi[\bar b_1 - \bar a] \wedge \neg\varphi[\bar b_2 - \bar a]$.
\end{enumerate}
\end{observation}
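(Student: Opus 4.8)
The plan is to prove the biconditional one formula at a time. Fix $\varphi(\bar x)\in\Lambda^{\text{pe}}_{\alpha,\varepsilon}(\tau)$ (the class written $\Lambda^e_{\alpha,\varepsilon}$ in the statement). By Definition \ref{a17}, $\bar b_1,\bar b_2$ are $\alpha$-equivalent over $\bold I$ exactly when, for every such $\varphi$ and every $\bar a\in\bold I\cap{}^\varepsilon M$, one has $M\models\varphi[\bar b_1-\bar a]\equiv\varphi[\bar b_2-\bar a]$. So it suffices to show, for each fixed $\varphi$, that the condition
\[
(\ast_\varphi)\qquad M\models\varphi[\bar b_1-\bar a]\equiv\varphi[\bar b_2-\bar a]\ \text{ for all }\bar a\in\bold I\cap{}^\varepsilon M
\]
is equivalent to the disjunction $(a)\vee(b)$ attached to $\varphi$; conjoining over all $\varphi$ then gives the Observation.

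The key input is Claim \ref{a8}. Setting $G_\varphi:=\varphi(\bar M)\le{}^\varepsilon M$ (a subgroup), the relation ``$M\models\varphi[\bar b-\bar a]$'' holds precisely when $\bar b-\bar a\in G_\varphi$, i.e. when $\bar a$ and $\bar b$ lie in one and the same coset of $G_\varphi$; I will write $\bar a\sim\bar b$ for this equivalence relation on ${}^\varepsilon M$. Translating: $(\ast_\varphi)$ says that for all $\bar a\in\bold I\cap{}^\varepsilon M$ we have $\bar a\sim\bar b_1\Leftrightarrow\bar a\sim\bar b_2$; clause $(a)$ says that $\bar b_1$ and $\bar b_2$ share a coset that meets $\bold I\cap{}^\varepsilon M$; and clause $(b)$ says that neither the coset of $\bar b_1$ nor that of $\bar b_2$ meets $\bold I\cap{}^\varepsilon M$.

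Next I would split into two cases. If $\bar b_1\sim\bar b_2$: then $\bar a\sim\bar b_1\Leftrightarrow\bar a\sim\bar b_2$ holds for every $\bar a$, so $(\ast_\varphi)$ holds; and $(a)\vee(b)$ holds as well, since the common coset of $\bar b_1,\bar b_2$ either meets $\bold I\cap{}^\varepsilon M$, giving $(a)$, or does not, giving $(b)$ (the coset of $\bar b_1$ equals that of $\bar b_2$). If $\bar b_1\not\sim\bar b_2$: then $(a)$ fails, and $(\ast_\varphi)$ becomes the assertion that no $\bar a\in\bold I\cap{}^\varepsilon M$ is $\sim$-related to $\bar b_1$ or to $\bar b_2$ — because if some $\bar a\in\bold I\cap{}^\varepsilon M$ had $\bar a\sim\bar b_1$, then $\bar a\not\sim\bar b_2$ (else $\bar b_1\sim\bar b_2$ by transitivity), contradicting the biconditional, and symmetrically with the roles of $\bar b_1,\bar b_2$ interchanged — which is exactly clause $(b)$. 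In both cases $(\ast_\varphi)\Leftrightarrow(a)\vee(b)$, and conjoining over $\varphi\in\Lambda^{\text{pe}}_{\alpha,\varepsilon}$ completes the argument.

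I do not anticipate a real obstacle: once Claim \ref{a8} rewrites ``$M\models\varphi[\bar c-\bar a]$'' as the coset-equality $\bar c\sim\bar a$, everything is elementary group theory together with propositional bookkeeping. The one point to keep straight is the asymmetric form of the two clauses — $(a)$ is existential and forces $\bar b_1\sim\bar b_2$, while $(b)$ is universal — and the fact that, since $\sim$ is an equivalence relation, together they capture precisely the requirement $(\ast_\varphi)$.
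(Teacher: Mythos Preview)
Your argument is correct and is exactly the kind of routine verification the paper has in mind: the paper's own proof consists of the single word ``Straight.'' Your reduction via Claim~\ref{a8} to cosets of the subgroup $G_\varphi=\varphi(\bar M)$ and the two-case split according to whether $\bar b_1\sim\bar b_2$ is the natural way to unpack that word, and nothing more is needed.
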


\begin{PROOF}{\ref{a19}}
Straightforward, recalling basic properties of cosets of Abelian groups.
\end{PROOF}

\sn
\begin{remark}\label{a22}
Note that the old proof of Theorem \ref{a10} relied on results in \S4, but more is proved there than is necessary here. Specifically, we just need Definition \ref{p0}(1) and clauses (a)+(b) of Claim \ref{p2}.

However, the proof presented below does not rely on \S4 at all.
\end{remark}

\mn
\begin{PROOF}[Proof of Theorem \emph{\ref{a10}}]{\ref{a10}}
We choose $\bfI_\alpha$ by induction on $\alpha$ such that 
\begin{enumerate}
    \item [$\circledast_\alpha$] 
    \begin{enumerate}
        \item $\bfI_\alpha \subseteq {}^{\theta>}\!M$ is of cardinality $\leq \kappa_\alpha$.
\sn
        \item Clause $\boxdot_\alpha$ from the statement of the theorem holds.
\sn
        \item ${}^{\theta>}\{0\} \subseteq \bfI_\alpha$
\sn
        \item {$\LL \bfI_\beta : \beta \leq \alpha\RR$ is $\subseteq$-increasing continuous.}
    \end{enumerate}
\end{enumerate}

For $\alpha = 0$ choose $\bfI_0 \defeq {}^{\theta>}\{0^M\}$, and for 
$\alpha$ a limit ordinal we obviously want $\bfI_\alpha \defeq \bigcup\limits_{\beta < \alpha} \bfI_\beta$. 

So assume $\alpha = \beta +1$ and $\bfI_\beta$ is given, and we shall choose $\bfI_\alpha$ such that
\mn
\begin{enumerate}
    \item[$\boxplus_{\beta+1}$]
    \begin{enumerate}
        \item $\bfI_\alpha$ is a subset of ${}^{\theta >}\!M$.
\sn
        \item $|\bfI_\alpha| \le 2^{\kappa_\beta}$ (recalling $\kappa_\beta \defeq \beth_\beta(|\tau|^{< \theta})$).
\sn
        \item $\bfI_\beta \subseteq \bfI_\alpha$
\sn
        \item If $\zeta < \theta$ and $\olsi\varphi = \LL \varphi_i(\bar x) : i < i_* \leq \kappa_\beta\RR$ 
        and $\olsi\varphi^\bullet = \LL \varphi_i^\bullet(\bar x) : i < i_\bullet\RR$ are sequences in $\Lambda_{\beta,\zeta}^\pe$, \underline{then} for some $\gamma_* \leq \kappa_\beta^+$ there exists $\LL\bar d_\gamma : \gamma < \gamma_*\RR \subseteq \bfI_\alpha$ with $\lh(\bar d_\gamma) \defeq \zeta$
        such that
        \begin{enumerate}
            \item $\bar d_\gamma \in \bigcap\limits_{i<i_*} \varphi_i(M)$ for all $\gamma < \gamma_*$.
\sn
            \item $\bar d_{\gamma_2} - \bar d_{\gamma_1} \notin \bigcup\limits_{i<i_\bullet} \varphi_i^\bullet(M)$ for all\footnote{
                \textbf{Note:} $\bullet_2$ does not exactly say that $\varphi_i^\bullet(M) \cap \bigcap\limits_{j<i_*} \varphi_j(M)$ has many cosets inside $\bigcap\limits_{i<i_*} \varphi_i(M)$. If $i_\bullet \defeq 1$ and $\gamma_* \defeq \kappa^+$ then yes, but otherwise it is more complicated.} 
            $\gamma_1 < \gamma_2 < \gamma_*$. 
\sn
            \item If $\gamma_* < \kappa_\beta^+$ and $\bar d \in \bigcap\limits_{i<i_*} \varphi_i(M)$ \underline{then} there exist $\gamma< \gamma_*$ and 
            $i < i_\bullet$ such that $\bar d - \bar d_\gamma \in \varphi_i^\bullet(M)$.
        \end{enumerate}       
    \end{enumerate}
\end{enumerate}

\smallskip
Why can we choose $\bfI_\alpha$? Let $\Xi$ be the set of triples $\bff = (\zeta,\olsi\varphi,\olsi\varphi^\bullet)$ satisfying the assumptions of clause (d). Clearly $\Xi$ has cardinality $\leq \big| \clP(\Lambda_{\beta,\eps+\eps}^\pe) \times \bfI_\alpha\big| \leq 2^{\kappa_\beta} + |\bfI_\alpha| = 2^{\kappa_\beta}$.

Rephrasing clause $\boxplus_{\beta+1}$(d) in this notation, we mean:
\begin{quotation}
    For every $\bff \in \Xi$ there are $\gamma_\bff \leq \kappa_\beta^+$ and $\gd_\bff = \LL\bar d_{\bff,\gamma} : \gamma < \gamma_\bff \RR \subseteq \bfI_\alpha$ such that the conclusions of clause (d) hold.
\end{quotation}
Let us fix $\bff = (\zeta_\bff,\olsi\varphi_\bff,\olsi\varphi_\bff^\bullet)$ and \underline{try} to choose $\bar d_{\bff,\gamma}$ by induction on $\gamma < \kappa_\beta^+$ to satisfy demands $\bullet_1$ and $\bullet_2$ of $\boxplus_{\beta+1}$(d).

Let $\gamma_\bff \leq \kappa_\beta^+$ be the first ordinal where we fail (or just where we stop). That is,

\sn
\begin{enumerate}
    \item [$\boxplus_\bff^1$] $\bar d_{\bff,\gamma}$ is defined \underline{iff} $\gamma < \gamma_\bff$.
\end{enumerate}
(Note that with these bounds, we will never try to define $\gamma_{\bff,\kappa_\beta^+}$.)

Let $\gd_\bff$ denote the sequence $\LL\bar d_{\bff,\gamma} : \gamma < \gamma_\bff \RR$. Note that

\sn
\begin{enumerate}
    \item [$\boxplus_\bff^2$] $\gamma_\bff \geq 1$ \underline{iff} $\bigcap\limits_{i<i_*} \varphi_i(M) \neq \varnothing$.
\end{enumerate}
[Why? Because bullet (d)$\,\bullet_2$ is vacuous when choosing $\bar d_{\bff,0}$, and $\bullet_1$ would be impossible to satisfy were the intersection empty.]

\smallskip
So for proving that $\gd_\bff$ is as required in $\boxplus_{\beta+1}$(d), we have to verify $\bullet_3$.

\mn
\textbf{Case 1:} $\gamma_\bff < \kappa_\beta^+$.

Here we know $\gd_\bff$ must also satisfy clause $\boxplus_{\beta+1}$(d)$\bullet_3$, as we cannot choose $\bar d_{\bff,\gamma_\bff}$.

In more detail: given some $\bar d \in \bigcap\limits_{i<i_*} \varphi_i(M)$, we have to prove that there exist $\gamma < \gamma_\bff$ and $i < i_\bullet$ such that
$\bar d - \bar d_\gamma \in \varphi_i^\bullet(M)$. Assume for the sake of a contradiction that there are no such $\gamma$ or $i$: in other words, 
$$
\bar d - \bar d_{\gamma_1} \notin \bigcup\limits_{i<i_\bullet} \varphi_i^\bullet(M)
$$ 
for any $\gamma_1 < \gamma_\bff$, which is precisely (d)\,$\bullet_2$.
$\bar d$ also satisfies bullet (d)\,$\bullet_1$ by assumption, and so $\gd_\bff \caret \LL\bar d\RR$ satisfies clause (d), contradicting $\boxplus_\bff^1$ (the maximality of $\gd_\bff$).

\mn
\textbf{Case 2:} $\gamma_\bff = \kappa_\beta^+$.

In this case we are done, as demand $\bullet_3$ is vacuous.

\medskip
So we have succeed in:
\begin{enumerate}
    \item [$\boxtimes_3$] 
    \begin{itemize}
        \item Choosing $\gd_\bff = \LL \bar d_{\bff,\gamma} : \gamma < \gamma_\bff\RR$ for every $\bff \in \Xi$.
\sn
        \item Proving that (d)$\bullet_1$-$\bullet_3$ holds for every $\bff \in \Xi$.
    \end{itemize}
\end{enumerate}
{Now let $\bfI_\alpha \defeq \bfI_\beta \cup \bigcup\limits_{\bff \in \Xi} \{\bar d_{\bff,\gamma} : \gamma < \gamma_\bff \}$; clearly it is as required in $\boxplus_{\beta+1}$.}

\bn
\centerline{*\qquad*\qquad*}

\medskip
At this point all we have done is prove that 
$\bfI_\alpha$ (for $\alpha  = \beta + 1$) can be chosen as required in $\boxplus_{\beta+1}$. Our job now is to prove that this 
$\bfI_\alpha$ satisfies the demands of $\circledast_\alpha$. 

Clauses $\circledast_\alpha$(a) and (d) are obvious, and clause (c) is satisfied by our construction, so we are left with $\circledast_\alpha$(b) --- equivalently, \ref{a10}$\boxdot_\alpha$. (Proving that we actually fulfill what was written in the theorem.)  

To this end, clearly it suffices to prove the following.
\begin{enumerate}
    \item[$\boxtimes_4$] Assume $\eps,\xi < \theta$, $\bar b_1,\bar b_2 \in {}^\eps\!M$ are 
    $\alpha$-equivalent over $\bfI_\alpha$, and $\bar c_1 \in {}^\xi\!M$. 
    \underline{Then} for some $\bar c_2 \in {}^\xi\!M$ the sequences $\bar b_1 \caret \bar c_1$ and $\bar b_2 \caret \bar c_2 \in {}^{\eps + \xi}\!M$ are $\beta$-equivalent over $\bfI_\beta$.
\end{enumerate}

Let $\olsi\varphi = \LL \varphi_i(\bar x, \bar y) : i < i_*\RR$ list the formulas in the set 
\begin{align*}
    \Phi_1 \defeq \big\{ \varphi(\bar x,\bar y) \in \Lambda_{\beta,\eps,\xi}^\pe : &\text{ there exists $\bar b \caret \bar c \in \bfI_\alpha$ such that } \lh(\bar b) = \eps,\\
    &\ \lh(\bar c) = \xi,
    \text{ and } M \models \varphi(\bar b_1 - \bar b,\bar c_1 - \bar c) \big\}.
\end{align*}
(Note that if $M \models \varphi[\bar b_1,\bar c_1]$ then $\varphi(\bar x,\bar y) \in \Phi_1$ as we can choose $\bar b \defeq \bar 0_\eps$ and $\bar c \defeq \bar 0_\zeta$.)

\medskip
Similarly, let $\olsi\varphi^\bullet = \LL \varphi_i^\bullet(\bar x, \bar y) : i < i_\bullet\RR$ list the formulas in $\Phi_2 \defeq \Lambda_{\beta,\eps,\xi}^\pe \setminus \Phi_1$. 
Clearly, without loss of generality $i_*$ and $i_\bullet$ are ordinals $\leq \kappa_\beta$, as $\big| \Lambda_{\beta,\eps,\xi}^\pe \big| = \kappa_\beta$.

Let $\zeta \defeq \eps+\xi$, and let $\LL \bar d_\gamma : \gamma < \gamma_*\RR$ be as guaranteed to exist by clause $\boxplus_{\beta+1}$(d),
for $\bff = (\zeta,\olsi\varphi,\olsi\varphi^\bullet) \in \Xi$ as above.

Let $\bar b_\gamma^\bullet \defeq \bar d_\gamma \rest \eps$ and $\bar c_\gamma^\bullet \defeq \bar d_\gamma \rest [\eps,\eps+\xi)$.

\mn
\textbf{Case 1:} $\gamma_* < \kappa_\beta^+$.

Let $\bar d \defeq \bar b_1 \caret \bar c_1$, so clearly $\bar d \in \bigcap\limits_{i<i_*} \varphi_i(M)$. By clause $\boxplus_{\beta+1}$(d)$\bullet_3$ we have $\bar d - \bar d_\gamma \in \varphi_i^\bullet(M)$ for some $\gamma < \gamma_*$ 
and $i < i_\bullet$. As $\bar d_\gamma \in \bfI_\alpha$, this implies that $\varphi_i^\bullet \in \Phi_1$. But we know $\varphi_i^\bullet \in \Phi_2$ by our definition of $\olsi\varphi^\bullet$, a contradiction.

\mn
\textbf{Case 2:} $\gamma_* = \kappa_\beta^+$.

As $\bar b_1$ and $\bar b_2$ are $\alpha$-equivalent (see \ref{a17}, \ref{a19}) there exists $\bar c_2' \in {}^\xi\!M$ such that $M \models \bigwedge\limits_{i<i_*} \varphi_i(\bar b_2,\bar c_2')$. By our choice of the $\bar d_\gamma$-s, for every $\gamma < \gamma_*$ we have 
$M \models \bigwedge\limits_{i<i_*} \varphi_i(\bar b_\gamma^\bullet,\bar c_\gamma^\bullet)$, hence
$$
M \models \bigwedge\limits_{i<i_*} \varphi_i[\bar b_2 - \bar b_\gamma^\bullet + \bar b_1,\bar c_2' - \bar c_\gamma^\bullet + \bar c_1].
$$

For each $i < i_\bullet$ the set 
$$
\clS_i \defeq \big\{ \gamma < \gamma_* : M \models \varphi_i^\bullet[\bar b_2 - \bar b_\gamma^\bullet + \bar b_1,\bar c_2' - \bar c_\gamma^\bullet + \bar c_1] \big\}
$$ 
has at most one member (by $\boxplus_{\beta+1}$(d)$\bullet_2$ plus some algebra).

\sn
[In more detail: towards contradiction, assume $\gamma_1 \neq \gamma_2$ are in $\clS_i$, so
$$
M \models \varphi_i^\bullet[\bar b_2 - \bar b_{\gamma_\ell}^\bullet + \bar b_1,\bar c_2' - \bar c_{\gamma_\ell}^\bullet + \bar c_1]
$$
for $\ell = 1,2$. 

But $\varphi_i^\bullet(M)$ is closed under subtraction, and 
$$
(\bar c_2' - \bar c_{\gamma_1}^\bullet + \bar c_1) - (\bar c_2' - \bar c_{\gamma_2}^\bullet + \bar c_1) = \bar c_{\gamma_2}^\bullet - \bar c_{\gamma_1}^\bullet
$$
and
$$
(\bar b_2 - \bar b_{\gamma_1}^\bullet + \bar b_1) - (\bar b_2 - \bar b_{\gamma_2}^\bullet + \bar b_1) = \bar b_{\gamma_2}^\bullet - \bar b_{\gamma_1}^\bullet.
$$
Hence $M \models \varphi_i^\bullet[\bar b_{\gamma_2}^\bullet - \bar b_{\gamma_1}^\bullet,\bar c_{\gamma_2}^\bullet - \bar c_{\gamma_1}^\bullet]$, contradicting $\boxplus_{\beta+1}$(d)$\bullet_2$.]

\mn
Hence 
$$
\big|\textstyle\bigcup\limits_{i<i_\bullet} \clS_i\big| \leq |i_\bullet| \leq i_\bullet \leq \kappa_\beta < \gamma_* \defeq \kappa_\beta^+,
$$
so there exists $\gamma \in \gamma_* \setminus \bigcup\limits_{i<i_\bullet} \clS_i$. Now $\bar c_2' - \bar c_\gamma^\bullet + \bar c_1$ is as required in clause $\boxtimes_4$.
\end{PROOF}

\bn
\centerline{*\qquad*\qquad*}

\mn
\begin{PROOF}[\textsc{Earlier Proof of \ref{a10}:}]{\ref{a10}OLD}\label{a25}
We choose $\bfI_\alpha$ by induction on $\alpha$ such that 
\begin{enumerate}
    \item [$\circledast_\alpha$] 
    \begin{enumerate}
        \item $\bfI_\alpha \subseteq {}^{\theta>}\!M$ is of cardinality $\leq \kappa_\alpha$.
\sn
        \item $\big(\forall \eps < \theta\big) \big(\forall \varphi(\bar x) \in \Lambda_{\alpha,\eps}^\pe(\tau) \big) \big(\forall \bar a \in {}^\eps\!M\big)  \big(\exists \bar b \in \bfI_\alpha \cap  {}^\eps\!M\big) \big[ M \models \varphi[\bar b - \bar a]\big]$
\sn
        \item $\LL \bfI_\beta : \beta \leq \alpha\RR$ is $\subseteq$-increasing continuous.
\sn
        \item $\bfI_\alpha$ satisfies the demands of the theorem (i.e.\ $\boxdot_\alpha$ holds).
    \end{enumerate}
\end{enumerate}

For $\alpha = 0$ choose $\bfI_0 \defeq {}^{\theta>}\{0^M\}$, and for 
$\alpha$ a limit ordinal we obviously want $\bfI_\alpha \defeq \bigcup\limits_{\beta < \alpha} \bfI_\beta$. 

So assume $\alpha = \beta +1$ and $\bfI_\beta$ is given, and we shall choose $\bfI_\alpha$ such that
\mn
\begin{enumerate}
    \item[$\boxplus_{\beta+1}$]
    \begin{enumerate}
        \item $\bfI_\alpha$ is a subset of ${}^{\theta >}\!M$.
\sn
        \item $|\bfI_\alpha| \le 2^{\kappa_\beta}$, where $\kappa_\beta \defeq \beth_\beta(|\tau|^{< \theta})$.
\sn
        \item $\bfI_\beta \subseteq \bfI_\alpha$
\sn
        \item If $\eps < \theta$, $\varphi_i(\bar x) \in \Lambda^\pe_{\beta,\eps}$, 
        $\bar a_i \in \bfI_\beta \cap {}^\eps\!M$ for $i <i_* \le \kappa_\beta$, and there is $\bar d \in {}^\eps\!M$ such that $M \models \bigwedge\limits_{i<i_*}\varphi_i[\bar d - \bar a_i]$,
        \underline{then} there is such $\bar d \in \bfI_\alpha$.
\sn
        \item Assume $\eps < \theta$, $\lh(\bar x) = \eps$, $\psi(\bar x)$ is a conjunction of formulas from $\Lambda^\pe_{\beta,\eps}$ and $\varphi_i(\bar x) \in \Lambda^\pe_{\beta,\eps}$ for $i < \kappa_\beta$. 

        Let $G$ be the Abelian group with set of elements 
        $$
        \psi({}^\eps\!M) = \big\{\bar a \in {}^\eps\!M : M \models \psi[\bar a] \big\}.
        $$
        and addition of two such sequences defined coordinatewise. Let $\olsi\varphi = \LL \varphi_i : i < \kappa_\beta\RR$ and $G_i \defeq \varphi_i(M) \cap \psi(M)$ (so it is a subgroup of $G$).
        Letting $\mu \defeq 2^{\kappa_\beta}$, we apply \ref{p2} with $\lambda \defeq \mu^+$, $S \defeq \kappa_\beta$, and 
        $\olsi G \defeq \LL G_i : i \in S\RR$ (although for this proof we will only need clauses $(a)$ and $(b)$ of the conclusion).
        
        This gives us a certain family of sets $I \subseteq \clP(S)$ -- not necessarily an ideal. Further assume that $S \defeq \kappa_\beta \notin I$. \underline{Then}
        \begin{enumerate}
            \item There are $\bar d_\iota \in \bfI_\alpha \cap \varphi_i({}^\eps\!M)$ for  
            $\iota < \iota_* \le \mu$ such that for every 
            $\bar a \in G \defeq \psi({}^\eps\!M)$ there exists $\iota < \iota_*$ such that 
            $$
            \big\{j \in S : \bar a - \bar d_\iota \notin \varphi_j({}^\eps\!M) \big\} \in I.
            $$
            \item For any $u \in I$ there is a set $u_*$ with $u \subseteq u_* \in I$ and a sequence
            $$
            \LL\bar d_\iota : \iota < \mu\RR \subseteq \bigcap\{\varphi_i({}^\eps\!M) : i \in S \setminus u_*\} \cap \psi({}^\eps\!M) \cap \bfI_\alpha
            $$ 
            such that 
$$
            \big( \forall i < \kappa_\beta \big) \big( \forall \iota_1 \le \iota_2 < \mu \big) \big[ \bar d_{\iota_1} - \bar d_{\iota_2} \notin \varphi_i({}^\eps\!M) \Leftrightarrow i \in u_* \big].
$$
        \end{enumerate}
\sn
        \item If $\eps < \theta$ and $\bar d_1,\bar d_2 \in \bfI_\alpha \cap {}^\eps\!M$ \underline{then} $\bar d_1 + \bar d_2 \in \bfI_\alpha$, $\bar d_1 - \bar d_2 \in \bfI_\alpha$, and  
        $\xi < \theta \Rightarrow \bar 0_\xi \caret \bar d_1 \in \bfI_\alpha$.
    \end{enumerate}
\end{enumerate} 

\mn
Why can we choose such $\bfI_\alpha$?
It suffices to show that each of the demands above hold for a club's worth of sets in $[{}^\eps\!M]^\mu$. 

For reference, recall
\begin{itemize}
    \item For a set $X$ and cardinal $\Upsilon$, $E$ is a club of $[X]^{\leq\Upsilon}$ \underline{iff} there exist functions $F_{\xi,n} : X \to X$ for $\xi < \Upsilon$ and $n < \omega$ {such that}
$$
    E = \{ u \in [X]^{\leq\Upsilon} : u \text{ is closed under the $F_{\xi,n}$-s}\}.
$$
    \item If $E_i$ is a club of $[X]^{\leq\Upsilon}$ for $i < i_* \leq \Upsilon$ then so is $\bigcap\limits_{i<i_*} E_i$.
\end{itemize}

Now $\boxplus_{\beta+1}$(a),(b) are obvious.

This holds for clause (c) by the induction hypothesis, as we will choose $\bfI_\alpha$ by adding elements to $\bfI_\beta$.

This holds for clause (d) because  $\Lambda^\pe_{\beta, \eps }$ has cardinality $\kappa_\beta$ and $\mu = 2^{\kappa_\beta}$.

This holds for (e)$\bullet_1$ by clause $(a)$ of Claim \ref{p2}and for (e)$\bullet_2$ by \ref{p2}$(b)$.

\bigskip
To prove the induction statement for $\alpha$, clearly it suffices
to prove the following.
\begin{enumerate}
    \item[$\boxdot$] Assume $\eps,\xi < \theta$, $\bar b_1,\bar b_2 \in {}^\eps\!M$ are 
    $\alpha$-equivalent over $\bfI_\alpha$, and $\bar c_1 \in {}^\xi\!M$. 
    \underline{Then} for some $\bar c_2 \in {}^\xi\!M$ the sequences $\bar b_1 \caret \bar c_1$ and $\bar b_2 \caret \bar c_2 \in {}^{\eps + \xi}\!M$ are $\beta$-equivalent over $\bfI_\beta$. (Here we rely on \ref{a19}.)
\end{enumerate}

\mn
Why does $\boxdot$ hold? Let $\bar x$ be of length $\eps$ and $\bar y$ of length $\xi$.
Let 
$$
\Phi_1 \defeq \big\{ \varphi(\bar x,\bar y) \in \Lambda^\pe_{\beta,\eps + \xi} : \text{for some } \bar a \in \bfI_\beta \cap {}^{\eps + \xi}\!M \text{ we have } 
M \models \varphi[\bar b_1 \caret \bar c_1 - \bar a] \big\}.
$$ 
For $\varphi(\bar x,\bar y) \in \Phi_1$, we know (by the induction hypothesis) that $\circledast_\beta$ holds; hence we can choose 
$\bar a_{\varphi(\bar x,\bar y)} \in \bfI_\beta \cap {}^{\eps + \xi}\!M$ such that 
$M \models \varphi[\bar b_1 \caret \bar c_1 - \bar a_{\varphi(\bar x,\bar y)}]$. Let $\Phi_2 \defeq \Lambda^\pe_{\beta,\eps + \xi} \setminus \Phi_1$.

So by $\boxplus_{\beta+1}$(d) there is a sequence 
$\bar b^* \caret \bar c^* \in \bfI_\alpha$ such that $\lh(\bar b^*) = \lh(\bar b_1)$, $\lh(\bar c^*) = \lh(\bar c_1)$, and
$\varphi(\bar x,\bar y) \in \Phi_1 \Rightarrow M \models \varphi[\bar b^* \caret \bar c^* - \bar a_{\varphi(\bar x,\bar y)}]$. 
For transparency, note that if $\Phi_2 = \varnothing$ then 
(as the formula $(\exists \bar y)\bigwedge\limits_{\varphi \in \Phi_1} \varphi(\bar x,\bar y)$ is a member of $\Lambda^\pe_{\alpha,\eps + \xi}$) clearly by the assumption
of $\boxdot$ there is $\bar c_2 \in {}^\xi\!M$ such that 
$$
\varphi(\bar x,\bar y) \in \Phi_1 \Rightarrow M \models \varphi(\bar b_2 \caret \bar c_2 - \bar a_{\varphi(\bar x,\bar y)}).
$$
So $\bar c_2$ is as required, hence we are done. 

So without loss of generality $\Phi_2 \ne \varnothing$. 
Clearly $|\Phi_2| \le \kappa_\beta$, and let 
$$
\Phi'_\ell \defeq \{\varphi(\bar 0_\eps,\bar y) : \varphi(\bar x,\bar y) \in \Phi_\ell\}
$$ 
for $\ell=1,2$.

Let $\{\neg \varphi_i(\bar x \caret \bar y - \bar a_i) : i < \kappa_\beta\}$ list 
(possibly with repetitions) 
the set of formulas $\neg \varphi(\bar x \caret \bar y - \bar a)$ satisfied by $\bar c_1 \caret \bar b_1$ with $\bar a \in \bfI_\beta$ and
$\varphi(\bar x,\bar y) \in \Lambda^\pe_{\beta,\eps,\zeta}$ (equivalently, $\varphi(\bar x,\bar y) \in \Phi_2$).
Let $\varphi'_i(\bar y) \defeq \varphi_i(0_\eps,\bar y)$
for $ i < \kappa_\beta $, and 
$\psi'(\bar y) \defeq \bigwedge\limits_{\varphi \in \Phi'_1} \varphi(\bar y)$.

As in $\boxplus_{\beta+1}$, let $S \defeq \kappa_\beta$ and $I = I_\lambda \subseteq \clP(S)$ be defined as in Definition \ref{p0}, with 
$G \defeq \psi'({}^\xi\!M)$, 
$G_i \defeq G \cap \varphi'_i({}^\xi\!M)$ for $i \in S$, and $\lambda \defeq \mu^+ = (2^{\kappa_\beta})^+$.

\bn
\textbf{Case 1:} $S \defeq \kappa_\beta \in I$.

So clearly $M \models \varphi[\bar b_1 - \bar b^*,\bar c_1 - \bar c^*]$ 
for every $\varphi(\bar x,\bar y) \in \Phi_1$.

Let $\psi_*(\bar x,\bar y) = \bigwedge\{\varphi(\bar x,\bar
y) : \varphi(\bar x,\bar y) \in \Phi_1\}$; clearly it is a member of 
$\Lambda^\pe_{\alpha,\eps,\zeta}$ and 
$M \models \psi_*[\bar b_1,\bar c_1]$. Hence by the choice of $(\bar b^*,\bar c^*)$ we also have $M \models \psi_*[\bar b^*,\bar c^*]$. 
As $\psi_*$ is positive existential, clearly
$M \models \psi_*[\bar b_1 - \bar b^*,\bar c_1 - \bar c^*]$, hence 
$$
M \models (\exists \bar y)\psi_*[\bar b_1 - \bar b^*,\bar y].
$$
But $(\exists \bar y) \psi(\bar x,\bar y) \in \Lambda^\pe_{\alpha,\eps}$, so by the
assumption on $\bar b_1$ and $\bar b_2$ we have 
$$
M \models (\exists \bar y)\psi_*[\bar b_2 - \bar b^*,\bar y],
$$
hence for some $\bar c'_2$ we have $M \models \psi_*[\bar b_2 - \bar b^*,\bar c'_2]$. 
Let $\bar c''_2 \defeq \bar c'_2 + \bar c^*$, so 
$$
M \models \psi_*[\bar b_2 - \bar b^*,\bar c''_2 - \bar c^*].
$$
By $\boxplus_{\beta+1}$(e)$\bullet_2$ and our assumption that $\kappa_\beta \in I$,
there is a sequence $\LL \bar e_\iota : \iota < \mu \RR$ of members of $G$ 
(i.e.\ of $\big\{ \bar a \in {}^\xi\!M : M \models \psi_*(\bar 0_\eps,\bar a) \big\}$), 
recalling that $ \psi '( \bar{ y }= \psi (0_\eps , \bar{ y })  $, 
such that 
$$
i < \kappa_\beta \wedge (\iota_1 < \iota_2 < \mu )\ \Rightarrow\ \bar e_{\iota_2} - \bar e_{\iota_1} \notin G_i.
$$

So for every $\iota < \mu $, the sequence 
$(\bar b_2 - \bar b^*) \caret (\bar c''_2 - \bar c^* + \bar e_\iota)$ belongs to
$\psi_*({}^{\eps + \xi}\!M)$ and for each $i < \kappa_\beta$ the
set $\{\iota < \mu :(\bar b_2 - \bar b^*) \caret (\bar c''_2 - \bar c^* + \bar e_\iota)$
belongs to $(\bar a_i - \bar b^* \caret \bar c^*) + G_i\}$ has at
most one member. As $\kappa_\beta < \mu $, we have 
$$
(\bar b_2 - \bar b^*) \caret (\bar c''_2 - \bar c^* + \bar e_\iota) \notin \bigcup \big\{ (\bar a_i - \bar b^*) \caret (\bar c^* + G_i) : i < \kappa_\beta \big\}
$$ 
for\footnote{
    Recall that $\bar b + G_i$ just means $\{\bar b + \bar a : \bar a \in G_i\}$.
} 
some $\iota < \mu$.

So $\bar c_2 \defeq \bar c''_2 + \bar e_\iota$ is as required.

\bn
\textbf{Case 2:} $S \notin I$.

So there is a sequence $\LL \bar d_\iota : \iota < \iota_*\RR$
of members of $\bfI_\alpha$ as
in $\boxplus_{\beta+1}$(e)$\bullet_1$ for $\xi,G$, $\LL G_i : i < \kappa_\beta\RR$ as above
(i.e.\ with $\bar\psi'(\bar y)$, $\LL \varphi'_i(\bar y) : i < \kappa_\beta\RR$ here standing in for $\psi(\bar x),\LL \varphi_i(\bar x) : i < \kappa_\beta\RR$ there). So $\iota_* < (2^{\kappa_\beta})^+ = \mu^+$ 
and 
$\iota < \iota_* \Rightarrow \bar d_\iota \in \bfI_\alpha \cap {}^\xi\!M$. As clearly
$\bar c_1 - \bar c^* \in G$, necessarily for some $\iota < \iota_*$ the set 
$$
u \defeq \big\{ i < \kappa_\beta : (\bar c_1 - \bar c^* - \bar d_\iota) \notin G_i \big\}
$$ 
belongs to $I$ (and of course, $\bar b^* \caret (\bar c^* + \bar d_\iota) \in \bfI_\alpha \cap {}^{\eps + \xi}\!M$) and we have:
\sn
\begin{enumerate}
    \item[$(*)_1$] $M \models \varphi[\bar b_1 - \bar b^*,\bar c_1 - \bar c^* - d_\iota]$ for $\varphi \in \Phi_1$.
\sn
    \item[$(*)_2$] If $i \in \kappa_\beta \setminus u$ then $M \models \varphi_i[\bar b_1 - \bar b^*,\bar c_1 - \bar c^* - \bar d_\iota]$.
\end{enumerate}

\mn
As in Case 1, there is $\bar c''_2 \in {}^\xi\!M$ such that
\begin{enumerate}
    \item[$(*)_3$] $M \models \varphi[\bar b_2 - \bar b^*,\bar c''_2 - \bar c^* - \bar d_\iota]$ for $\varphi \in \Phi_1$.
\end{enumerate}

\sn
Hence
\begin{enumerate}
    \item[$(*)_4$] If $i \in \kappa_\beta \setminus u$ then 
    $M \models \varphi_i[\bar b_2 - \bar b^*,\bar c''_2 - \bar c^* - \bar d_\iota]$.
\end{enumerate}

As $u \in I$ by $\boxplus_{\beta+1}$(e)$\bullet_2$ (that is, by \ref{p2}) there
are $\bar{\bfe} = \LL \bar e_j : j < \mu\RR$ and $u_*$ with 
$u \subseteq u_* \in I$ such that:
\begin{enumerate}
    \item[$(*)_5$] $\{ \bar e_j : j < \mu\} \subseteq \bigcap\limits_{i \in \kappa_\beta \setminus u_*} G_i$.
\sn
    \item[$(*)_6$] $\bar e_{j_2} - \bar e_{j_1} \notin G_i$ for all $j_1 < j_2 < \mu$ and $i \in u_*$.
\end{enumerate}

\mn
So
\sn
\begin{enumerate}
    \item[$(*)_7$] If $j < \mu$ then $(\bar b_2 - \bar b^*) \caret (\bar c''_2 - \bar c^* - \bar d_\iota - \bar e_j)$ belongs to $\bigcap\limits_{\varphi \in \Phi_1} \varphi({}^{\eps + \xi}\!M)$.
\sn
    \item[$(*)_8$] If $i \in \kappa_\beta \setminus u_*$ then $i \in \kappa_\beta \setminus u$ as well, so by $(*)_4 $+$ (*)_5$ the sequence 
    $$
    (\bar b_2 - \bar b^*) \caret (\bar c''_2 - \bar c^* - \bar d_\iota - \bar e_j)
    $$ 
    satisfies $\varphi_i(\bar x \caret \bar y - \bar a_i)$ in $M$, hence $\bar b_2 \caret (\bar c''_2 - \bar e_j)$ satisfies the formula\\ $\neg \varphi_i(\bar x \caret \bar y - \bar a_i)$ in $M$.
\end{enumerate}
\mn
Lastly, by $(*)_6$,
\mn
\begin{enumerate}
    \item[$(*)_9$] For each $i \in u_*$ there is $j_i < \mu$ such that for every $j \in \mu \setminus \{j_i\}$, the sequence 
    $(\bar b_2 - \bar b^*) \caret (\bar c''_2 - \bar c^* - \bar d_\iota - \bar e_j)$ satisfies 
    $\neg \varphi_i(\bar x \caret \bar y - \bar a_i)$.
\sn
    \item[$(*)_{10}$] Moreover, 
    \begin{enumerate}
        \item The set $\mu \setminus \{j_i : i \in u_*\}$ is non-empty.
\sn
        \item For some {(equivalently, `for every')} $j$ in this set we have
$$
        (\forall i \in u_*) \neg\varphi_i\big[ (\bar b_2 - \bar b^*) \caret (\bar c''_2 - \bar c^* - \bar d_\iota - \bar e_j) - \bar a_i \big].
$$
    \end{enumerate}  
\end{enumerate}
[Why? Clause (a) is true simply because $\mu \defeq 2^{\kappa_\beta} > \kappa_\beta \geq |u_*|$, and clause (b) follows from $(*)_9$.]

\medskip
Putting together $(*)_7 $--$(*)_{10}$, clearly $(\bar c''_2 - \bar
c^* - \bar d_\iota - \bar e_j)$ is as required in $\boxdot$, so we are done.
\end{PROOF}




\bn
\begin{conclusion}\label{a24}
If $\lambda$ is a fixed point of the beth sequence (i.e.\ $\lambda = \beth_\lambda$) \underline{then} for $\tau$ a $\theta$-additive vocabulary for $M \in \bfK$, every formula $\varphi(\bar x) \in \bbL_{<\lambda,<\lambda}(\tau)$ (i.e.\ $\bigcup\limits_{\theta<\lambda} \bbL_{\theta,\theta}(\tau)$) is equivalent to a Boolean combination of positive existential formulas (from $\bbL_{\theta,\theta}$ for some $\theta = \theta_\varphi < \lambda$).
\end{conclusion}

\newpage

\section {Stability}

\begin{context}\label{b2}
1) $R$ is a fixed ring with $\tau = \tau_R$, \underline{or}
$\tau$ is a $\theta$-additive vocabulary; $\bfK$ is the
class of $\tau$-additive models.

\mn
2) $M$ will denote a member of $\bfK$. 

\mn
3) $\theta = \cf(\theta)$ and $\gamma^*$ is an ordinal --- limit, for simplicity.

\mn
4) $\bar \lambda = \LL \lambda_\alpha : \alpha \le \gamma^*\RR$, where 
$\lambda_\alpha > \kappa_\alpha \defeq \beth_\alpha\big(|R| + \theta^-\big)$.

\mn
5) $\bar{\bfI}^*$ is a $(\bar\lambda,\theta,\gamma^*)$-witness (that is, $\bfI_\alpha^*$ satisfies the conditions of \ref{a10} and $|\bfI_\alpha^*| < \lambda_\alpha$ for each $\alpha < \lh(\bar{\bfI}^*) \defeq \gamma_*$).

\mn
6) $A_* \defeq \bigcup\{\bar a : \bar a \in \bfI_{\gamma^*}\}$.

\mn
7) $\Lambda_\eps \defeq \Lambda^\pe_{\gamma^*,\eps}$ for $\eps < \theta$, and 
$\Lambda \defeq \bigcup\limits_{\eps < \theta} \Lambda_\eps$.

\mn
8) $M_* = M_{A_*} \defeq (M,a)_{a \in A_*}$.
\end{context}

\bn
\begin{definition}\label{b6}
Assume $\eps < \theta$, $\Lambda \subseteq \Lambda^\pe_{\theta,\gamma^*}$, 
$A_* \subseteq A \subseteq M \in \bfK$, and $\bar a \in {}^\eps\!M$.

\mn
1) For $\bar a \in {}^\eps\!M$, let 
\begin{align*}
    \tp_\Lambda\!(\bar a,A,M) \defeq \big\{ \varphi(\bar x \caret \bar b - \bar c) : &\ \bar b \in {}^\xi\!A,\ \bar c \in {}^{\eps + \xi}\!M,\ M \models \varphi[\bar a_1 \caret \bar b - \bar c],\\
    &\text{ and } \varphi(\bar x,\bar y) \in \Lambda^\pe_{\gamma,\eps + \xi} \cap \Lambda \big\}.
\end{align*}

\mn
2) $\bfS^\eps_\Lambda(A,M) \defeq \{\tp_\Lambda(\bar a,A,M) : \bar a \in {}^\eps\!M\}$.
\end{definition}

\bn
\begin{theorem}[\textbf{The Stability Theorem}]\label{b7}
Assume $\Lambda \subseteq \Lambda^\pe_{\gamma^*}$ and $A \subseteq M \in \bfK$.

\mn
$1)$ The set $\bfS^\eps_\Lambda(A,M)$ has cardinality 
$\le \big(|A|^{< \theta} \big)^{|\Lambda|}$.

\mn
$2)$ For any $\kappa \ge 4$ (yes, \emph{four}!) there are no 
$\LL\bar a_\alpha : \alpha < \kappa\RR \subseteq {}^\eps\!M$, $\LL\bar b_\alpha : \alpha < \kappa\RR \subseteq {}^\zeta\! M$ and\footnote{
    This also holds for $\neg \varphi(\bar x,\bar y)$, but for $\kappa$ finite we can invert the order.
}
$\varphi(\bar x,\bar y) \in \Lambda^\pe_{\gamma^*,\eps,\xi}$ such that for 
$\alpha < \beta < \kappa$ we have 
$$
M \models ``\varphi[\bar a_\alpha,\bar b_\beta] 
\wedge \neg \varphi[\bar a_\beta,\bar b_\alpha]".
$$

\mn
$3)$ If the formula $\varphi(\bar x,\bar y)$ is from $\bbL_{\infty,\theta,\gamma^*}$ 
(or is just a Boolean combination of such formulas) and 
$\kappa \ge \beth_{\gamma^*+2}(|\tau|^{< \theta})^+$, 
\underline{then} there are no $M \in \bfK$, $\LL\bar a_\alpha : \alpha < \kappa\RR \subseteq {}^\eps\!M$, and $\LL\bar b_\alpha : \alpha < \kappa\RR \subseteq {}^\zeta\! M$ such that 
$$
M \models \varphi[\bar a_\alpha,\bar b_\beta] \wedge \neg \varphi[\bar a_\beta,\bar b_\alpha]
$$ 
whenever $\alpha < \beta < \kappa$. 

(Actually, $\kappa \ge \beth_{\gamma^*+1}
(|\tau|^{< \theta})^+$ will suffice.)

\mn
$4)$ If $p \in \bfS^\eps_\Lambda(A,M)$, 
$\varphi(\bar x,\bar y) \in \Lambda^\pe_{\gamma^*,\eps,\xi}$, and 
$p \cap \{\varphi(\bar x,\bar b) : \bar b \in {}^\xi\!A\} \ne \varnothing$, 
\underline{then} for some $\bar a_\varphi \in {}^\eps\!A$ and $\bar b \in {}^\xi\!A$ we have $\varphi(\bar x - \bar a_\varphi,\bar b) 
\vdash p \rest \{\pm \varphi\}$ and $\varphi(\bar x-\bar
a_\varphi,\bar b) \in p$.
\end{theorem}

\sn
\begin{PROOF}{\ref{b7}}
1) Consider the statement:
\begin{enumerate}
    \item[$\circledast$] If $\varphi(\bar x,\bar y) \in \Lambda^\pe_{\gamma^*,\eps,\xi} \cap \Lambda$, 
    $$
    p_\ell(\bar x) \defeq \tp_{\{\varphi(\bar x,\bar y)\}}\!(\bar a_\ell,A,M) \in \bfS^\eps_{\{\varphi(\bar x,\bar y)\}}(A,M)
    $$
    for $\ell=1,2$, $\bar b \in {}^\xi\!A$ and $\bar c \in {}^{\eps + \xi}\!A$, and $\varphi(\bar x \caret \bar b - \bar c) \in p_1(\bar x) \cap p_2(\bar x)$, \underline{then} $p_1(\bar x) = p_2(\bar x)$.
\end{enumerate}

\medskip
Why is $\circledast$ true? Assume 
$\varphi(\bar x \caret \bar b' - \bar c') \in p_1(\bar x)$, so 
$\bar a_1 \caret \bar b' - \bar c' \in \varphi(\olsi M)$. But we are assuming 
$\varphi(\bar x \caret \bar b - \bar c) \in p_\ell(\bar x) = \tp_{\{\varphi(\bar x,\bar y)\}}(\bar a_\ell,A,M)$, hence $\bar a_\ell \caret \bar b - \bar c \in \varphi(M)$ 
for $\ell=1,2$. Together, 
$$\bar a_2 \caret \bar b' - \bar c' = (\bar a_2 \caret \bar b - \bar c) - (\bar a_1 \caret \bar b - \bar c) + (\bar a_1 \caret \bar b' - \bar c') \in \varphi(M),$$ 
hence $\varphi(x \caret \bar b' - c') \in p_2(x)$. So
$\varphi(\bar x \caret \bar b' -\bar c') \in p_1 \Rightarrow \varphi(\bar x \caret b' - \bar c') \in p_2$, and by symmetry we have
`$\Leftrightarrow$,' hence $p_1(\bar x) = p_2(\bar x)$. I.e.\ we have
proved $\circledast$.

\medskip
Why is $\circledast$ sufficient? For every $\xi <\theta,\varphi(\bar x,\bar
y) \in \Lambda^\pe_{\gamma^*,\eps,\xi} \cap \Lambda$ 
and $p(\bar x) \in \bfS^\eps_\Lambda(A,M)$ 
choose $(\bar b_{p(\bar x),\varphi(\bar
x,\bar y)},\bar c_{p(\bar x),\varphi(\bar x,\bar y)})$ such that
\begin{enumerate}
    \item[$\oplus_1$] 
    \begin{enumerate}
        \item $\bar b_{p(\bar x),\varphi(\bar x,\bar y)} \in {}^\eps\!A$ and $\bar c_{p(\bar x),\varphi(\bar x,\bar y)} \in {}^{\eps + \xi} A$
\sn
        \item If possible, $\varphi(\bar x \caret \bar b_{p(\bar x),\varphi(\bar x,\bar y)} - \bar c_{p(\bar x),\varphi(\bar x,\bar y)}) \in p(\bar x)$.
    \end{enumerate}
\end{enumerate}

\mn
For $p(\bar x) \in \bfS^\eps_\Lambda(A,M)$, let
$\Phi_{p(\bar x)} \defeq \{\varphi(\bar x,\bar y) \in \Lambda^\pe_{\gamma,\eps,\xi} : \oplus_1\text{(b) does hold}\},$ and let 
$$
q_{p(\bar x)} \defeq \big\{\varphi(\bar x \caret  \bar b_{p(\bar x),\varphi(\bar x,\bar y)} - \bar c_{p(\bar x),\varphi(\bar x,\bar y)}) : \varphi(\bar x,\bar y) \in \Phi_{p(\bar x)}\big\}.
$$

\mn
Now,
\begin{enumerate}
    \item[$\oplus_2$] If $p_1(\bar x),p_2(\bar x) \in \bfS^\eps_\Lambda(A,M)$, $\Phi_{p_1(\bar x)} = \Phi_{p_2(\bar x)}$, and $q_{p_1(\bar x)} = q_{p_2(\bar x)}$, \underline{then} $p_1(\bar x) = p_2(\bar x)$.
\end{enumerate}
[Why? Just think about it.]

\mn
\begin{enumerate}
    \item[$\oplus_3$]  $\big| \big\{(\Phi_{p(\bar x)},q_{p(\bar x)}) : p(\bar x) \in \bfS^\eps_\Lambda(A,M) \big\} \big| \le 2^{|\Lambda|} + (|A|^{< \theta})^{|\Lambda|}$
\end{enumerate}
[Why? Straightforward.] 

\smallskip
Clearly we are done.

\mn
2) Note that $\varphi(\bar x,\bar y) \in \Lambda^\pe_{\gamma,\eps,\xi}$ implies that
\begin{enumerate}
    \item[$\boxplus$] $M \models ``\varphi[\bar a,\bar b] \wedge \varphi[\bar a,\bar b'] \wedge \varphi[\bar a',\bar b]" \Rightarrow M \models ``\varphi[\bar a',\bar b']"$.
\end{enumerate}
\mn
[Why? As $\varphi({}^{\eps + \zeta}\!M)$ is a subgroup of ${}^{\eps + \zeta}\!M$ and 
$\bar a \caret \bar b$, $\bar a' \caret \bar b$ and $\bar a \caret \bar b'$ belong to it. Therefore so does $\bar a' \caret \bar b + (\bar a \caret \bar b') - (\bar a \caret \bar b)$, but that is equal to $\bar a' \caret \bar b'$.]

\smallskip
So we can choose $\bar a = \bar a_0$, $\bar a' = \bar a_3$, $\bar b = \bar
b_1$, and $\bar b' = \bar b_2$, and get a contradiction.

\mn
3) Toward contradiction, let $\LL \bar a_\alpha : \alpha < \kappa\RR \subseteq {}^\eps\!M$ form a counterexample.
By the Erd\H os-Rado Theorem, 
$$
\beth_{\gamma^*+2}\big(|\tau|^{< \theta}\big)^+ \to \big(4 \big)^2_{\beth_{\gamma^*+1}(|\tau|^{<\theta})}.
$$ 
Now for $\alpha < \beta < \kappa$, let 
$p_{\alpha,\beta} \defeq \tp_{\Lambda^\pe_{\gamma^*,\eps,\eps}}\!(\bar a_\alpha \caret \bar a_\beta;\varnothing,M)$. 
So $\{p_{\alpha,\beta} : \alpha < \beta\}$ has cardinality 
$\le \beth_{\gamma^*+1}(|\tau|^{<\theta})$; 
hence by the arrow above, for some $p$ and some $\alpha_0 < \alpha_1 < \alpha_2 < \alpha_3$ we have 
$$
(\forall  k <\ell < 4)[p_{\alpha_k,\alpha_\ell} = p].
$$ 
We get a contradiction by part (2). 

If $\kappa$ is just $\ge \beth_{\gamma^*+1}(|\tau|^{< \theta})^+$, 
use clause $\boxplus$ from the proof of part (2) and 
repeat a proof of the Erd\H os-Rado Theorem.

\mn
4) Should be clear.
\end{PROOF}

\bn
Recall (from \cite{Sh:300a})
\begin{definition}\label{b8}
For $\Phi \subseteq \Lambda$, we say $\bfI \subseteq {}^\eps\!M$ is 
$(\mu,\Phi)$-\emph{convergent} \underline{when} ($|\bfI| \ge \mu$ and) for
every $\xi < \theta$, $\varphi(\bar x) \in \Phi_{\eps +\xi}$, and 
$\bar b \in {}^\xi \!M$ and $\bar c \in {}^{\xi + \eps}\!M$, for
all but $< \mu$ of the $\bar a \in \bfI$, the truth value of $\bar a \caret \bar b - \bar c \in \varphi(M)$ is constant.
\end{definition}

\bn
\begin{claim}\label{b10}
$1)$ The following is a sufficient condition for $\bfI = \{\bar a_i : i < \lambda\} \subseteq {}^\eps\!M$ to be $(\mu,\Phi)$-convergent: 
$$
i<j<\lambda \wedge \varphi(\bar x) \in \Phi \cap \Lambda_\eps \Rightarrow \bar a_j - \bar a_i \in \varphi(M).
$$

\mn
$2)$ If $\eps < \theta$, $\lambda = \cf(\lambda) > \mu \ge \mu_{\gamma^*}$, 
$(\forall i < \lambda) \big[ |i|^{\mu_{\gamma^*}} < \lambda \big]$, and 
$\LL\bar a_i : i < \lambda\RR \subseteq {}^\eps\!M$ is without repetition, \underline{then} for some stationary $S \subseteq \lambda$, $\{\bar a_i : i \in S\}$ is $(\mu^+,\Phi)$-convergent.
\end{claim}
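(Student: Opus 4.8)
The plan is to obtain part~(1) from the subgroup calculus of Claim~\ref{a8}, and then to deduce part~(2) from part~(1) by counting $\Lambda$-types over $A_*$ and pressing down. For part~(1): fix $\xi<\theta$, $\varphi(\bar x,\bar y)\in\Phi_{\varepsilon+\xi}$, $\bar b\in{}^\xi M$, $\bar c\in{}^{\varepsilon+\xi}M$, and set $\varphi^{\downarrow}(\bar x):=\varphi(\bar x\char 94\bar 0_\xi)$. An easy induction on $\gamma(*)$ (substituting $\bar 0$ for a block of variables turns a term into a term, a conjunction into a conjunction, and commutes with $\exists$) shows $\Lambda^{\text{pe}}_{\gamma(*)}$ is closed under such substitutions, so $\varphi^{\downarrow}\in\Lambda^{\text{pe}}_{\gamma(*),\varepsilon}$ and $\varphi^{\downarrow}(M)=\{\bar d\in{}^\varepsilon M:\bar d\char 94\bar 0_\xi\in\varphi(M)\}$. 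Now for $i<j$ one has $(\bar a_j\char 94\bar b-\bar c)-(\bar a_i\char 94\bar b-\bar c)=(\bar a_j-\bar a_i)\char 94\bar 0_\xi\in\varphi(M)$, since $\bar a_j-\bar a_i\in\varphi^{\downarrow}(M)$ by the hypothesis applied to $\varphi^{\downarrow}$ (if one does not wish to assume $\Phi$ closed under zero-substitution, one applies part~(1) with $\Lambda$ in the role of $\Phi$, which is all part~(2) will need). As $\varphi(M)$ is a subgroup (Claim~\ref{a8}), all the $\bar a\char 94\bar b-\bar c$ for $\bar a\in\bold I$ lie in one coset of $\varphi(M)$, so the truth value of ``$\bar a\char 94\bar b-\bar c\in\varphi(M)$'' is \emph{constant} on $\bold I$; hence, given $|\bold I|\ge\mu$, $\bold I$ is $(\mu,\Phi)$-convergent, with zero exceptions rather than merely $<\mu$.

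For part~(2), the crucial observation is that $\operatorname{tp}_\Lambda(\bar a,A_*,M)$ already determines, for every $\psi\in\Lambda^{\text{pe}}_{\gamma(*),\varepsilon}$, the coset $\bar a+\psi(M)$: taking $\xi=0$ in Definition~\ref{b6}(2) and letting $\bar c$ range over ${}^\varepsilon M$, the formula $\psi(\bar x-\bar c)$ lies in the type exactly when $\bar a\in\bar c+\psi(M)$. In particular, if $\bar a,\bar a'$ have the same $\Lambda$-type over $A_*$ then $\bar a-\bar a'\in\psi(M)$ for all $\psi\in\Lambda^{\text{pe}}_{\gamma(*),\varepsilon}$. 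Next I count: since $\gamma(*)$ is a limit ordinal, $\bold I^*_{\gamma(*)}=\bigcup_{\beta<\gamma(*)}\bold I^*_\beta$, so $|A_*|\le\kappa_{\gamma(*)}\le\mu_{\gamma(*)}<\lambda$; then Claim~\ref{b7}(1) and its proof (which yields the bound $2^{|\Lambda|}+(|A_*|^{<\theta})^{|\Lambda|}$, with $|\Lambda|\le\mu_{\gamma(*)}$) give $|\bold S^\varepsilon_\Lambda(A_*,M)|\le 2^{\mu_{\gamma(*)}}$, and $2^{\mu_{\gamma(*)}}<\lambda$ because $2^{\mu_{\gamma(*)}}\le|i|^{\mu_{\gamma(*)}}<\lambda$ for any $i<\lambda$ with $|i|\ge2$. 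Thus $i\mapsto\operatorname{tp}_\Lambda(\bar a_i,A_*,M)$ maps $\lambda$ into a set of size $<\lambda$.

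Finally, since $\lambda$ is regular the nonstationary ideal $\mathrm{NS}_\lambda$ is $\lambda$-complete, so $\lambda$ is not the union of $<\lambda$ nonstationary sets; hence some fibre $S:=\{i<\lambda:\operatorname{tp}_\Lambda(\bar a_i,A_*,M)=p^*\}$ is stationary. For $i,j\in S$ and $\psi\in\Lambda^{\text{pe}}_{\gamma(*),\varepsilon}$ we get $\bar a_i-\bar a_j\in\psi(M)$ (as $\psi(\bar x-\bar a_j)\in\operatorname{tp}_\Lambda(\bar a_j,A_*,M)=\operatorname{tp}_\Lambda(\bar a_i,A_*,M)$). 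Enumerating $\{\bar a_i:i\in S\}$ in increasing order of $i$, the hypothesis of part~(1) holds with $\Lambda$ (a fortiori with $\Phi$) in the role of $\Phi$, and $|\{\bar a_i:i\in S\}|=|S|=\lambda\ge\mu^+$ since the $\bar a_i$ are without repetition; so $\{\bar a_i:i\in S\}$ is $(\mu^+,\Phi)$-convergent, as required.

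The one genuinely content-bearing step is the opening claim of part~(2) — that a $\Lambda$-type over the fixed set $A_*$ already pins down every coset $\bar a+\psi(M)$, so that only $<\lambda$ coset-patterns occur among the $\bar a_i$; everything after that is routine ($\lambda$-completeness of $\mathrm{NS}_\lambda$, then part~(1)). The only care needed is the cardinal bookkeeping, namely that $|A_*|$, $|\Lambda|$ and $\lambda_{\gamma(*)}$ are all $\le\mu_{\gamma(*)}$, which is what makes the inequalities go through; note also that part~(1) is invoked in its strong ``zero exceptions'' form, which is exactly why a single stationary homogeneous $S$ suffices and no finer thinning argument is required.
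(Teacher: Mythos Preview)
Your argument is correct and is precisely the intended one: the paper's own proof is the single sentence ``Should be clear'', so you have simply supplied the details --- the coset calculation for part~(1) via Claim~\ref{a8}, and for part~(2) the type-count from Claim~\ref{b7}(1) followed by the $\lambda$-completeness of $\mathrm{NS}_\lambda$ and an appeal back to part~(1). One small remark: your key step in part~(2), that $\mathrm{tp}_\Lambda(\bar a,A_*,M)$ already determines every coset $\bar a+\psi(M)$, reads Definition~\ref{b6}(2) literally (with $\bar c$ ranging over ${}^{\varepsilon}M$); this is fine, but note that the bound in Claim~\ref{b7}(1) then really rests on Theorem~\ref{a10} (two tuples $\gamma(*)$-equivalent over $\bold I_{\gamma(*)}$ satisfy the same $\bbL_{\infty,\theta,\gamma(*)}$-formulas with arbitrary parameters, hence have the same $\Lambda$-type), which gives the required $\le 2^{\kappa_{\gamma(*)}}$ bound directly --- so your invocation of \ref{b7}(1) is justified.
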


\mn
\begin{remark}
1) Note that being $(\mu,\bfI)$-convergent is very close to being
$\lepref{\omega}$-indiscernible, and is sometimes the reasonable generalization of indiscernibility.

\mn
2) So \ref{b10}(1) says that 2-indiscernible \emph{almost} implies 
$\lepref{\omega}$-indiscernible.

\mn
3) Also, \ref{b10}(2) says that there are $\lepref{\omega}$-indiscernibles.
\end{remark}

\mn
\begin{PROOF}{\ref{b10}}
Should be clear.
\end{PROOF}

\newpage

\section {How much does the subgroup exhaust a group?}

\begin{explanation}\label{p99}
1) The motivation for this section comes from the old proof of Theorem \ref{a10}, but we find it interesting in its own right. It considers the question 
\begin{quotation}
    Given a group $G$ and a sequence $\LL G_s : s \in S\RR$ of its subgroups, under what conditions do we have $G = \bigcup\limits_s G_s$? 
\end{quotation}

\mn
2) More specifically, Definition \ref{p0} is a strong way to say $G \neq \bigcup\limits_s G_s$, and gives a more exact way to measure how different they are.

\mn
\textbf{Question:} Why?

\mn
\textbf{Answer:} Assume $S \in I_\lambda$, as witnessed by $\LL g_\alpha : \alpha < \lambda\RR$ (see \ref{p0}) and $G = \bigcup\limits_{s \in S} G_s$, and we shall prove that $\lambda \leq |S|$.

\mn
[Why? As $G  = \bigcup\limits_s G_s$ and $\{g_\alpha : \alpha < \lambda\} \subseteq G$, clearly there is a function $f : \lambda \to S$ such that $g_\alpha \in G_{f(\alpha)}$ for all $\alpha < \lambda$. Now $f$ must be one-to-one, because 
$$
\alpha \neq \beta \wedge f(\alpha) = f(\beta) \Rightarrow g_\alpha G_{f(\alpha)} = G_{f(\alpha)} = G_{f(\beta)} = g_\beta G_{f(\beta)} = g_\beta G_{f(\alpha)},
$$
contradicting the definition of $\bar g$.]

\mn
3) Recall that in the old proof of \ref{a10} we used only \ref{p0} and \ref{p2}$(a)$-$(b)$, and from \ref{p1}(3) we use only the sentence starting with `Moreover.'

Clauses \ref{p2}$(c)$-$(g)$ are not used elsewhere, but we still find them interesting: in particular clause $(d)$, giving sufficient conditions for $I_\lambda$ to be an ideal.

\mn
4) An obvious point --- {but still} we note that $\lambda$ is free to vary, so $I_{G,\olsi G,\lambda}$ is defined for all $\lambda$, even with $G,\olsi G$ fixed.


\mn
5) In this section, we may even allow the $G_s$ to be affine subsets of $G$ in the Abelian case. (So $G_s = g_sG_s'$ for some $g_s \in G$, where $G_s'$ is a subgroup of $G$.)

This makes no difference.
\end{explanation}

\bn
\begin{definition} \label{p0}
Assume $G$ is a group and 
$\olsi G = \LL G_s : s \in S \RR $ 
is a sequence of subgroups of $G$. 

\mn
1) For $\lambda \ge 0,$ let 
$I = I_\lambda = I_{G, \olsi G, \lambda}$ be the set of 
$u \subseteq S$ which are \emph{witnessed} 
by some sequence $\bar g = \LL g_\alpha : \alpha < \lambda\RR \subseteq G$.

By this we mean
$$
s \in u \wedge \alpha < \beta < \lambda\ \Rightarrow\ g_\alpha G_s \ne g_\beta G_s.
$$

\mn
2) For $ \lambda \ge  0 
$ let $ I_{<\lambda} = I_{G, \olsi G, <\lambda} \defeq  \bigcap \limits_{\mu<\lambda} I_\mu$.

\mn
3) Let $I^+ \defeq \clP(S) \setminus I$ for any $ I \subseteq \clP(S)$.  
\end{definition}

\bn
\begin{claim}\label{p1} 
Let $ G, S, \olsi G $ be as in Definition \emph{\ref{p0}}; hence $ I_\mu = I_{G, \olsi G,\mu}$ is well-defined for any cardinal $\mu$.

\mn
$1)$ For any $\lambda$, the sequence  $\LL I_\mu : \mu < \lambda \RR $ is $\subseteq$-decreasing
(that is, $ \mu < \lambda \Rightarrow I_\lambda \subseteq I_\mu$).

\mn
$2)$ If in addition $\lambda$ has cofinality $ > 2^{|S|}$, \underline{then} the sequence 
$\LL I_\mu : \mu < \lambda  \RR$ is eventually constant. 

\mn
$3)$ There is $\xi < (2^{|S|})^{+4}$ of cofinality 
$(2^{|S|})^+$ such that $\lambda \defeq \aleph_\xi$ 
satisfies all the demands mentioned in \emph{\ref{p2}} below. 

Moreover, if (e.g.) $ \lambda = (2^{|S|})^+$, \underline{then} $\lambda$ satisfies the demands 
in \emph{\ref{p2}(a),(b)}.
 
\mn
$4)$ Similarly, {when} $\lambda \defeq \beth_\delta$ with $\delta = (2^{|S|})^+$, or just $\cf(\delta) > 2^{|S|}$.

\mn
5) If $\lambda > |G|^{<\theta}$ then $I_{G,\olsi G,\lambda} = \{\varnothing\}$.
\end{claim}

\begin{PROOF}{\ref{p1}}
1) Obvious.

\mn
2) If $\lambda$ is a successor cardinal this is trivial. 

If $\lambda$ is limit, then $\LL I_\mu : \mu < \lambda\RR$ is a $\subseteq$-decreasing sequence of subsets of $\clP(S)$, which has cardinality $2^{|S|}$. The conclusion is clear.

\mn
3) The second sentence is obvious, but for the first sentence we will need to quote results from elsewhere.
Recall\footnote{
    $\pp(\mu)$ is defined for singular cardinals $\mu$ in \cite[Ch.II,\,1.1]{Sh:g} (or in \cite[6.1]{Sh:E12}), but this is not used {in this section}.
}
\begin{enumerate}
    \item [$(*)_1$]
    If $\kappa$ is regular, $\mu > \cf(\mu) \geq \kappa$, and 
    $$
    (\forall \chi < \mu)\big[ \cf(\chi) < \kappa \Rightarrow\pp(\chi) < \kappa \big],
    $$
    then $\cf([\mu]^{< \kappa},\subseteq) = \mu$.
\end{enumerate}
[Why? By \cite[Ch.II,\,5.1,5.4,6.12]{Sh:g} (or \cite[5.1,7.1,7.4]{Sh:E12}).]

\begin{enumerate}
    \item [$(*)_2$] $\cf(\mu) < \mu \Rightarrow \pp(\mu) \leq \mu^{\cf(\mu)}$.
\end{enumerate}
[Why? Obvious from the definition of $\pp(-)$.]

\begin{enumerate}
    \item [$(*)_3$] Assume $\kappa = \aleph_{\xi+\zeta}$ is singular, and $\zeta < \aleph_\zeta$. 
    If $|\zeta| = \kappa^{++}$ and $\cf(\zeta) < \kappa = \cf(\kappa)$, then $\pp(\aleph_{\xi+\zeta}) < \aleph_{\xi+|\zeta|^+}$.
\end{enumerate}
[Why? By \cite[\S4]{Sh:400}.]

\begin{enumerate}
    \item [$(*)_4$] If $2^\kappa \leq \mu$ and $\cf([\mu]^\kappa,\subseteq) = \mu$ then $\mu^\kappa = \mu$.
\end{enumerate}
[Why? Obvious.]

\begin{enumerate}
    \item [$(*)_5$] If $\theta \leq \mu = \mu^\theta$ then for some club $E$ of $\theta^{+4}$ we have
    \begin{enumerate}
        \item If $\delta \in E$ with $\cf(\delta) > \theta$ and $\xi \defeq \min(E \setminus \delta+1)$, then $\pp(\mu^{+\delta}) < \mu^{+\xi}$.
\sn
        \item If $\delta \in \acc(E) \defeq \big\{ \alpha \in E : \alpha = \sup(E\cap \alpha) \big\}$ and $\cf(\delta) > \theta$, \underline{then} $\cf\big( [\mu^{+\delta}]^{\leq\theta},{\subseteq}\big) = \mu^{+\delta}$.
\sn
        \item Moreover, in part (b) we have $(\mu^{+\delta})^\theta = \mu^{+\delta}$.
    \end{enumerate}
\end{enumerate}
[Why? By the previous statements.]

Now part (3) is a special case of $(*)_5$
as $(2^{|S|})^{|S|} = 2^{|S|}$.

\mn
4) Easy as well.

\mn
5) If $u \in I$ as witnessed by $\LL g_\alpha : \alpha < \lambda\RR$ then $\LL g_\alpha G_s : \cyan{\alpha < \lambda}\RR$ is without repetition, hence $\LL g_\alpha : \alpha < \lambda\RR$ is without repetition. Hence $|G| \geq \lambda$ --- a contradiction.
\end{PROOF}

\bn
\begin{claim}\label{p2}
Assume $\LL G_s : s \in S\RR$ is a sequence of subgroups of the 
group $G$  
  and $ \lambda $ a cardinal. 
The set $ I = I_\lambda = I_{G, \olsi G, \lambda}$ satisfies:
\mn
\begin{enumerate}[$(a)$]
    \item If $S \notin I$, $\cf(\lambda) > 2^{|S|}$, and 
    $\alpha < \lambda \Rightarrow |\alpha|^{|S|} < \lambda$ 
    (e.g.\ $(\exists \mu)[\lambda = (\mu^{|S|})^+]$), \underline{then} there is 
    $A \subseteq G$ of cardinality $< \lambda$ such that 
    $$
    (\forall g \in G)( \exists a \in A)\big[\{s \in S : g G_s \ne a G_s\} \in I \big].
    $$
    \item If $S \notin I$,  
    $\alpha < \lambda \Rightarrow |\alpha|^{|S|} < \lambda$, and $\lambda$ is regular,
    \underline{then} 
    for every $u \in I$ there exists $\bar g$ and $v \in I$ such that
    \begin{enumerate}
        \item[$\bullet_1$] $u \subseteq v$
\sn
        \item[$\bullet_2$] $\bar g = \LL g_\alpha : \alpha < \lambda \RR$
\sn
        \item[$\bullet_3$] $g_\alpha G_s = g_0 G_s$ for all $\alpha < \lambda$ and $s \in S \setminus v$. Moreover, 
        $$
        \alpha < \lambda \Rightarrow g_\alpha \in \bigcap\limits_{s \in S \setminus v} G_s.
        $$
        \item[$\bullet_4$] If $s \in v$ and $0< \alpha < \beta < \lambda$,
        then $g_\alpha G_s \ne g_\beta G_s$.
    \end{enumerate}
\sn
    \item $I \subseteq \clP(S)$ is closed under subsets.
\sn
    \item $I = I_\lambda$ is an ideal, \emph{provided that}
    \begin{enumerate}[$\bullet_1$] 
        \item For some $\theta \in \big(|S|, \lambda \big)$ we have $ I_\theta = I_\lambda$.
\sn
        \item $G$ is Abelian (\underline{or} just each $G_s$ is a normal subgroup of $G$).
    \end{enumerate} 
\sn
    \item Assuming clause $(d)\bullet_2$ and $\lambda > |S|^+$,
    $$
    (\forall u_1,u_2 \in I_\lambda)( \forall \mu < \lambda)[ u_1 \cup u_2 \in I_\mu].
    $$
    \item Assuming clause $(d)\bullet_2$, if $|S| < \aleph_0$ then $I$ is an ideal.
    \item The following holds:
\begin{enumerate}[$\bullet_1$] 
    \item If $ u \in I_\lambda $ and $\{g_\alpha : \alpha < \alpha_*\}$ 
    is $\subseteq$-maximal such that 
    $$
    \alpha < \beta < \alpha_* \wedge s \in S \Rightarrow g_\alpha G_s \neq g_\beta G_s,
    $$  
    then $ \alpha_* \in [\lambda, \lambda^+)$.
\sn
    \item If $ \lambda $ is a limit cardinal of cofinality 
         $> 2^{|S|}$ then $(\exists \theta < \lambda)[ I_\theta = I_\lambda]$.
 \end{enumerate}    
\end{enumerate}     
\end{claim}



\mn
\begin{PROOF}{\ref{p2}}
Let $I= I_\lambda $   
be defined as in \ref{p0}.

\mn
Now,
\begin{enumerate}
    \item[$\circledast_0$] $I \subseteq \clP(S)$ is $\subseteq$-downward closed: i.e.\ $u \in I \wedge v \subseteq u \Rightarrow v \in I$.
\end{enumerate}
[Why? Obvious.]

\medskip
This covers clause $(c)$.

\medskip
Toward proving clause $(a)$ of the claim, for each 
$u \in I^+ \defeq \clP(S) \setminus I$,
let $\bar g_u = \LL g_{u,\alpha} : \alpha < \alpha_u\RR$ be a
maximal sequence of members of $G$ such that 
$$
\alpha < \beta < \alpha_u \wedge s \in u\ \Rightarrow\ g_{u,\alpha} G_s \ne g_{u,\beta} G_s.
$$
As $u \in \clP(S) \setminus I$, necessarily
$\alpha_u < \lambda$ (by the definition of $I$), and as we are assuming 
$\cf(\lambda) > 2^{|S|}$, clearly $\alpha_* \defeq \sup\{\alpha_u : u \in I^+\} < \lambda$. 
So 
$$
B \defeq \{g_{u,\alpha} : u \in I^+,\ \alpha < \alpha_u\}
$$ 
is a subset of $G$ of cardinality $< \lambda$. 

Next,
\begin{enumerate}
    \item [$\circledast_1$] For every $u \in I$ and $h:S \setminus u \to B$, choose $g_h \in G$ such that 
    $$
    \boxplus_{h,g_h} \vee(\forall g \in G)\big[\neg\boxplus_{h,g} \big],
    $$
    where 
\sn
    \item [$\boxplus_{h,g}$]
    \begin{itemize}
        \item $g \in G$
\sn
        \item $h:S \setminus u \to B$
\sn
        \item $(\forall s \in S \setminus u)[g G_s = h(s) G_s]$.
    \end{itemize}
\end{enumerate}
Explicitly, if there exists such a $g$ then {choose one of them as our $g_h$}; otherwise let $g_h \defeq g_{u,0}$, just so that it is defined.

Now
$$
A \defeq \{g_h : h \in {}^{S \setminus u}\!B,\ u \in I, \text{ and } \boxplus_{h,g_h}\}
$$ 
is a subset of $G$ of cardinality\footnote{
    Recall that we are assuming $(\forall \alpha < \lambda) \big[|\alpha|^{|S|} < \lambda \big]$.
} 
$\le |B|^{|S|} < \lambda$.  

For showing $A$ is as required in clause $(a)$, fix $g_* \in G$. Let 
$$
v \defeq \big\{s \in S : (\forall w \in I^+)(\forall\alpha < \alpha_w) [g_* G_s \neq g_{w,\alpha} G_s] \big\}.
$$
Now if $v \in I^+$ then $\bar g_v = \LL g_{v,\alpha} : \alpha < \alpha_v\RR$ is well-defined
and $g_*$ satisfies\\ 
$(\forall\alpha < \alpha_v) [g_* G_s \neq g_{v,\alpha} G_s]$,
contradicting the maximality of $\bar g_v$. 

Therefore $v \in I$ by our choices (as $I \cup I^+ = \clP(S)$ and $v \subseteq S$), so by the definition of $v$ we can find a function $h : S \setminus v \to B$ such that 
$$
s \in S \setminus v \Rightarrow g_* G_s = h(s) G_s.
$$ 
So $g_*$ and $h$ satisfy $\boxplus_{h,g_*}$, hence by $\circledast_1$ there is a $g_h \in A$ satisfying $\boxplus_{h,g_h}$, so clause $(a)$ holds with $a \defeq g_h$.

\bigskip
\centerline{*\qquad*\qquad*}

\medskip 
For clause $(b)$, let $u \in I$ be given and let $\LL g_\alpha : \alpha < \lambda\RR$ witness that $u \in I$. For each $\alpha < \lambda$, let 
$$
u_\alpha \defeq \big\{s \in S : (\exists\beta < \alpha ) [g_\alpha G_s = g_\beta G_s] \big\}.
$$ 
Clearly $u_\alpha \cap u = \varnothing$; let $h_\alpha : u_\alpha \to \alpha$ be such that
$s \in u_\alpha \Rightarrow g_\alpha G_s = g_{h_\alpha\!(s)}G_s$.

As $\lambda$ is regular 
and   
recalling 
$(\forall \alpha < \lambda) \big[ |\alpha|^{|S|} < \lambda \big]$ 
by the present assumptions,   
for some $u_* \subseteq S$ and $h: u_* \to \lambda$, the set 
$$
\clW \defeq \{\alpha < \lambda : \cf(\alpha) = |S|^++\aleph_0,\ h_\alpha = h,\ u_\alpha = u_*\}
$$
is a stationary subset of $\lambda$.
Clearly 
$$
\alpha,\beta \in \clW \wedge s \in u_* \Rightarrow g_\alpha G_s = g_{h(s)} G_s = g_\beta G_s
$$ 
and 
$$
\alpha, \beta \in \clW \wedge \alpha \ne \beta \wedge s \in S \setminus u_* \Rightarrow g_\alpha G_s \ne g_\beta G_s.
$$
Letting $\LL \alpha_i : i < \lambda\RR$ list $\clW$ in increasing order and letting $g_i' \defeq g_{\alpha_i}$ for $i < \lambda$, clearly
$v \defeq u_*$ and $\LL g'_i : i < \lambda\RR$ 
are as promised in clause $(b)$.

For the `moreover' bit, 
recalling clause $(b)$ just says ``for some $\bar g$ and $v$," we let $g_\alpha'' \defeq (g_0')^{-1}g_\alpha$ and use the sequence $\LL g_\alpha'' : \alpha < \lambda\RR$. That is, 
first

$$
\sigma < \beta < \lambda \wedge s \in S \Rightarrow 
g_\alpha G_s \not= g_\beta G_s \Rightarrow 
(g_0)^{-1} (g_\alpha G_s )\not= (g_0)^{-1}  (g_\alpha G_s ) \Rightarrow 
g_\alpha '' G_s \not= g_\beta '' 
$$

Second 

$$
\alpha < \lambda \wedge s \in S \setminus v \Rightarrow g_\alpha''G_s = (g_0')^{-1}(g_\alpha G_s) = 
(g_0')^{-1}(g_0' G_s) = 
\big( (g_0')^{-1}g_0' \big) G_s = G_s.
$$
Therefore (as $v \defeq S \setminus u_*$) we have
$$
\alpha < \lambda \wedge s \in v\  \Rightarrow\ g_\alpha''G_s = G_s\ \Rightarrow\ g_\alpha'' \in G_s
$$
as promised.

\bigskip
\centerline{*\qquad*\qquad*}

\bigskip
Lastly, it just remains to prove clause $(e)$, as $(d)$ is an immediate consequence and clause $(f)$ is easier 
and clause $(g)$ is proved as in clause $(a)$.  

Let $u_1,u_2 \in I_\lambda$ be disjoint, and we shall prove that $u \defeq  u_1 \cup u_2 \in I_\mu$ when $\mu \in \big(|S|, \lambda \big)$. 
Let $\LL g_{\ell,\alpha} : \alpha < \lambda\RR$ witness `$u_\ell \in I_\lambda$' for $\ell=1,2$. 

We try to choose $g_{3,\eps} \in G$ by induction on $\eps < \mu$ such that 
$$
\zeta < \eps \wedge s \in u\ \Rightarrow\ g_{3,\eps} G_s \ne g_{3,\zeta} G_s;
$$
we shall also demand that $g_{3,\eps} \in \{g_{1,i} g_{2,j} : i,j < \lambda\}$.

Arriving to $\eps$, if for some $i , j < \lambda$ we can choose 
$g_{3,\eps} \defeq g_{1,i} g_{2,j}$, then we are done. 

Towards contradiction, assume there are no such $i$ and $j$. Then we have\\
$f : \lambda \times \lambda \to \eps$ and $h : \lambda \times \lambda \to u$ such that for every 
$(i,j) \in \lambda \times \lambda$ we have 
$$
g_{1,i} g_{2,j} G_{h(i,j)} = g_{3,f(i,j)} G_{h(i,j)}.
$$ 

For each $i< \lambda$, $\zeta < \eps$, and $s \in u \subseteq S$, let 
$$
\clU^2_{i,\zeta,s} \defeq \{j < \lambda : f(i,j) = \zeta,\ h(i,j) = s\}.
$$
Now $j \in \clU^2_{i,\zeta,s}\ \Rightarrow\ g_{1,i} g_{2,j} G_s = g_{3,\zeta} G_s\ \Rightarrow\ g_{2,j} G_s = g^{-1}_{1,i} g_{3,\zeta} G_s$; hence if $s \in u_2$ then 
$$
j \ne k \in \clU^2_{i,\zeta,s} \Rightarrow g_{2,j} G_s = (g^{-1}_{1,i} g_{3,\zeta}) G_s = g_{2,k} G_s
$$ 
--- the last statement contradicts our choice of $\LL g_{2,j} : j < \lambda\RR$. Hence $\clU^2_{i,\zeta,s}$ has cardinality $\le 1$ 
for all $i < \lambda$, $\zeta < \eps$, and $s \in u_2$.

For $j < \lambda$, $\zeta < \eps$, and $s \in u$, let
\[
\clU^1_{j,\zeta,s} \defeq \big\{i < \mu : f(i,j) = \zeta \text{ and } h(i,j) = s \big\}.
\]

If $G$ is Abelian, then (as above) we have $\zeta < \eps \wedge j < \lambda \wedge s \in u_1 \Rightarrow |\clU^1_{j,\zeta,s}| \le 1$. 
If $G$ is non-Abelian and every $G_s$ is a normal subgroup of $G$, then for any
$j < \lambda$, $\zeta < \mu$, $s \in u_1$ we have 
\begin{align*}
    i \in \clU^1_{j,\zeta,s}\ &\Rightarrow\ g_{1,i} g_{2,j} G_s = g_{3,\zeta} G_s\\ &\Rightarrow\ g_{1,i}(G_s g_{2,j}) = g_{1,i}(g_{2,j} G_s) = g_{3,\zeta} G_s\\
    &\Rightarrow\ g_{1,i} G_s = g_{3,\zeta}(G_s g^{-1}_{2,j}).
\end{align*}
Hence $i \ne k \in \clU^1_{j,\zeta,s} \Rightarrow g_{1,i} G_s = g_{3,\zeta}(G_s g^{-1}_{2,j}) = g_{1,k} G_s$. This is a contradiction, so again $\clU^1_{j,\zeta,s}$ has at most one member.
 
For $\ell \in \{1,2\}$ and $i < \lambda$, let $\clU^\ell_i \defeq \bigcup\limits_{\zeta < \eps}  \bigcup\limits_{s \in u_\ell} \clU^\ell_{i,\zeta,s}$,
so as $|u_\ell| \le |S|$ clearly $|\clU^\ell_i| \le |S| + |\eps|$. 
Recall that {we have} $\lambda > \mu > |S| + |\eps|$, {so} there are 
$i,j < \lambda$ such that $i \notin \clU^1_j \wedge j \notin \clU^2_i$; hence the member $g_{1,i} g_{2,j}$ of $G$ 
satisfies the demand on $g_{3,\eps}$.

So we can carry the induction on $\eps < \mu$, so we are
done proving clause {$(e)$}.
\end{PROOF}

\bn
\begin{claim}\label{p5}
In Claim \emph{\ref{p2}} there is a $W \subseteq S$ such that
\begin{enumerate}
    \item[$(a)$] There is a sequence $\bar s = \LL s_i : i < i_*\RR$ listing $W$ such that $\big(\bigcap\limits_{i<j} G_{s_i}:\bigcap\limits_{i \leq j} G_{s_i}\big)$ is finite for $j < i_*$ (stipulating $\bigcap\limits_{i<0} G_{s_i} \defeq G$).
\sn
    \item[$(b)$] $W$ is $\subseteq$-maximal among all subsets of $S$ satisfying clause $(a)$ above.
\end{enumerate}
\end{claim}

\begin{PROOF}{\ref{p5}}
Immediate.
\end{PROOF}

\bn
\centerline{*\qquad*\qquad*}

\bn
We may take a more general perspective.
\begin{definition}\label{p17}
1) We say $\bfm$ is a \emph{movement system} when it consists of
\begin{enumerate}[(a)]
    \item A set $S$.
\sn
    \item A sequence of sets $\LL\clS_s : s \in S\RR$.
\sn
    \item A set $G$.
\sn
    \item Mappings $\inv_{\!g} \in \prod\limits_s \clS_s$ for each $g \in G$.
\end{enumerate}

\mn
2) For $\lambda$ a cardinal, let $I_\lambda^\bfm$ be the family of sets $u \subseteq S$ which are \emph{witnessed} by some $\bar g = \LL g_\alpha : \alpha < \lambda\RR$.
By this we mean
\begin{enumerate}[(a)]
    \item $\{ g_\alpha : \alpha < \lambda\} \subseteq G_\bfm$
\sn
    \item For each $s \in u$, the sequence $\LL\inv_{\!g_\alpha}^\bfm(s) : \alpha < \lambda\RR$ is without repetition.
\end{enumerate}
\end{definition}

\bn
\begin{claim}\label{p20}
The parallel of \emph{\ref{p2}} holds for this definition.
\end{claim}

\newpage

\section {Concluding Remarks}

\begin{example}\label{p9}
An example of an additive structure is a ring satisfying $xy = -yx$.

E.g.\ if 
$(R,+^R)$ is $\bigoplus\{\bbZ x_s : s \in I\}$, $f$ is a function from
$I \times I$ into $R$ such that $f(x,y) = -f(y,x)$ (so $f(x,x)=0$),
and we have 
\[
\Big( \sum\limits_{\ell < \ell_*} a_\ell x_{s_\ell} \Big) \,
\Big( \sum\limits_{m < n_*} b_m x_{t_n} \Big) = \sum\limits_{\ell < \ell_*} \sum\limits_{m < m_*} a_\ell b_m x_{f(s_\ell,t_m)}.
\]
\end{example}

\bn
\begin{remark}\label{z15}
1) We may use $\tau \supseteq\{+,-,0,1\} \cup \{P_i : i < i_*\}$ with $P_i$ unary, and instead of modules we use $\tau$-models $M$ such that $|M|$ is
the disjoint union $\bigcup\limits_{i < i_*} P^M_i$, $+^M$ is a 
partial two-place function which can be decomposed into
$$
+^M \defeq \bigcup\{+^M \rest P^M_i : i < i_*\},
$$
$(P^M_i,+^M)$ an Abelian group, and all relations and functions commute with $+$ (or at least every relation is \emph{affine}). 

I.e., let $F_*(x,y,z) \defeq x-y+z$ and demand 
$$
G(\ldots,F_*(x_i,y_i,z_i),\ldots)_{i < i_*} = F_*\big(G(\bar x),G(\bar y),G(\bar z)\big),
$$ 
where $F_*(\bar a,\bar b,\bar c) \defeq \big\LL F_*(a_i,b_i,c_i) : i < \arity(P) \big\RR \in P^M$ for $\bar a,\bar b,\bar c \in P^M$.

\mn
2) However, as we use infinitary logics, if $M$ is the disjoint union of Abelian groups $G^M_i \defeq (P^M_i,+^M_i)$ for $i < i_*$ and we define $G_M$ as the direct sum having predicates for those subgroups, \underline{then} we have bi-interpretability. 
When we have only ``affine structure," we can expand by 
choosing an element in each {summand} to serve as zero.

\mn
3) It is natural to extend our logic by cardinality quantifiers which say
``the definable group $G$ quotient the definable subgroup $H$ has 
cardinality $\ge \lambda$." 
\end{remark}

\bn
\begin{remark}\label{y3}
Concerning Theorem \ref{a10}:

\mn
1) Note that instead of an $R$-module $M$ we can use $(M,c_\alpha)_{\alpha < \kappa}$: 
i.e., expand $M$ by $\kappa$-many individual constants. The only difference is that we will use $\beth_\alpha(|R|^{< \theta} + \kappa)$ instead of $\beth_\alpha(|R|^{< \theta})$.

\mn
2) Theorem \ref{a10} has an arbitrary choice --- the construction of the $\bfI_\alpha$-s. 
Instead of using extra individual constants, in the proof,\footnote{
    See $\boxplus_{\beta+1}$ in the proof of \ref{a10}.
} 
for any $\psi(\bar x)$, 
$\psi(\bar x) \wedge \varphi_i(\bar x)$ for $i < i_* < \kappa_\beta$, $I$, $G$, and $\LL G_i : i < i_*\RR$, we expand $M$ by:
\begin{enumerate}[(a)]
    \item $P^M \defeq \big\{\bar a : M \models \psi[\bar a]$ and $\{i < \kappa_\beta : \bar a \notin G_i\} \in I \big\}$, which is a subgroup.
\sn
    \item Predicates for the set $\big\{\bar a + P^M : \bar a \in \psi(M) \big\}$.
\end{enumerate}
So the proof shows that in $M$ we can eliminate quantifiers to
quantifier-free formulas in this expansion.

\mn
3) Also, this may give too much information. The result gives elimination of quantifiers, but unlike the first-order case we use more than just the positive existential formulas.

\mn
4) We can now define non-forking: hopefully \cite{Sh:F1210} will deal with this.
\end{remark}

\bn
\begin{question}\label{y8}
1) Are there arbitrarily large Abelian groups $G$ which are not only indecomposable, 
but even \emph{potentially} so? (I.e.\ absolutely --- even after any forcing $G$ is indecomposable?)

\mn
2) Relatives of this --- e.g., no \emph{potential} non-trivial automorphism.
\end{question}

\bn
\begin{discussion}\label{y12}
We know that up to the minimal cardinal $\lambda$ satisfying 
$$
\lambda \to (\omega)^{< \omega}_{\aleph_0},
$$
the answer is yes (and more). But if $|G| \ge \lambda$ \underline{then} absolutely it has non-trivial endomorphisms and even non-trivial embeddings of 
$G$ into itself (Eklof-Shelah \cite{Sh:678}, G\"obel-Shelah \cite{Sh:880}). We
can improve this to ``for some $a_1 \ne a_2$ from $G$;" potentially there
are embeddings $f_1,f_2$ of $G$ into itself such that $f_1(a_1) = a_2$ and $f_2(a_2) = a_1$ --- see \cite{Sh:F1210}.
\end{discussion}
\newpage


\bibliographystyle{amsalpha}
\bibliography{shlhetal}

\newcommand{\etalchar}[1]{$^{#1}$}
\providecommand{\bysame}{\leavevmode\hbox to3em{\hrulefill}\thinspace}
\providecommand{\MR}{\relax\ifhmode\unskip\space\fi MR }
\providecommand{\MRhref}[2]{%
  \href{http://www.ams.org/mathscinet-getitem?mr=#1}{#2}
}
\providecommand{\href}[2]{#2}
\begin{thebibliography}{She94b}

\bibitem[AGS25]{Sh:1246}
Mohsen Asgharzadeh, Mohammad Golshani, and Saharon Shelah, \emph{{Expressive
  power of infinitary logic and absolute co-Hopfianity}}, Illinois J. Math
  \textbf{69} (2025), no.~2, 269--302,
  \href{https://arxiv.org/abs/2309.16997}{arXiv: 2309.16997}. \MR{4919937}

\bibitem[Bau76]{Bau76}
Walter Baur, \emph{Elimination of quantifiers for modules}, Israel J. Math.
  \textbf{25} (1976), 64--70.

\bibitem[Ekl71]{Ek71}
Paul~C. Eklof, \emph{Homogeneous universal modules}, Math. Scand. \textbf{29}
  (1971), 187--196.

\bibitem[ES99]{Sh:678}
Paul~C. Eklof and Saharon Shelah, \emph{{Absolutely rigid systems and
  absolutely indecomposable groups}}, {Abelian groups and modules (Dublin,
  1998)}, Trends Math., Birkh\"auser, Basel, 1999,
  \href{https://arxiv.org/abs/math/0010264}{arXiv: math/0010264}, pp.~257--268.
  \MR{1735574}

\bibitem[Fis77]{Fis77}
Edward~R. Fisher, \emph{Abelian structures. i.}, Abelian group theory (Proc.
  Second New Mexico State Univ. Conf., Las Cruces, N.M., 1976) (Berlin),
  Lecture Notes in Math., vol. 616, Springer, 1977, pp.~270--322.

\bibitem[GS07]{Sh:880}
R{\"u}diger G{\"o}bel and Saharon Shelah, \emph{{Absolutely indecomposable
  modules}}, Proc. Amer. Math. Soc. \textbf{135} (2007), no.~6, 1641--1649,
  \href{https://arxiv.org/abs/0711.3011}{arXiv: 0711.3011}. \MR{2286071}

\bibitem[S{\etalchar{+}}]{Sh:F1210}
S.~Shelah et~al., \emph{Tba}, In preparation. Preliminary number: Sh:F1210.

\bibitem[She]{Sh:E12}
Saharon Shelah, \emph{{Analytical Guide and Updates to [Sh:g]}},
  \href{https://arxiv.org/abs/math/9906022}{arXiv: math/9906022} Correction of
  [Sh:g].

\bibitem[She71]{Sh:11}
\bysame, \emph{{On the number of non-almost isomorphic models of $T$ in a
  power}}, Pacific J. Math. \textbf{36} (1971), 811--818. \MR{0285375}

\bibitem[She90]{Sh:c}
\bysame, \emph{{Classification theory and the number of nonisomorphic models}},
  2nd ed., Studies in Logic and the Foundations of Mathematics, vol.~92,
  North-Holland Publishing Co., Amsterdam, 1990, Revised edition of [Sh:a].
  \MR{1083551}

\bibitem[She94a]{Sh:g}
\bysame, \emph{{Cardinal arithmetic}}, Oxford Logic Guides, vol.~29, The
  Clarendon Press, Oxford University Press, New York, 1994. \MR{1318912}

\bibitem[She94b]{Sh:400}
\bysame, \emph{{Cardinal Arithmetic}}, {Cardinal Arithmetic}, Oxford Logic
  Guides, vol.~29, Oxford University Press, 1994, Ch. IX of [Sh:g].

\bibitem[She09]{Sh:300a}
\bysame, \emph{{Universal Classes: Stability theory for a model}}, 2009, Ch. V
  of [Sh:i].

\bibitem[Szm49]{Szm48}
W.~Szmielew, \emph{Decision problem in group theory}, Proceedings of the tenth
  international Congress of Philosophy (Amsterdam), Library of the Tenth
  International Congress of Philosophy, vol.~1, North-Holland, 1949,
  pp.~763--766.

\bibitem[Szm55]{Szm55}
\bysame, \emph{Elementary properties of abelian groups}, Fund. Math.
  \textbf{41} (1955), 203--271.

\end{thebibliography}

\end{document}